\newcommand{\crefitem}[2]{\hyperref[#2]{\cref*{#1}(\ref*{#2})}}
\theoremstyle{plain}
\newtheorem{theorem}{Theorem}
\newtheorem*{theorem*}{Theorem}
\newtheorem*{conjecture}{Conjecture}
\newtheorem{lemma}{Lemma}
\newtheorem*{lemma*}{Lemma}
\newtheorem*{question}{Question}
\def\re@title{}
\newtheorem*{re@theorem}{\re@title}
\newenvironment{retheorem}[1]{%
 \def\re@title{\cref{#1}}%
 \begin{re@theorem}}%
 {\end{re@theorem}}
\theoremstyle{remark}
\theoremstyle{definition}
\newtheorem*{definition}{Definition}
\setlist{listparindent=\parindent,parsep=\parskip}
\newcommand{\from}{\colon}
\newcommand{\rest}[1]{\nobreak\raisebox{-.1ex}{$|$}_{#1}}
\newcommand{\defeq}{\coloneq}
\newcommand{\inter}[1]{\accentset{\smash{\raisebox{-0.12ex}{$\scriptstyle\circ$}}}{#1}\rule{0pt}{2.3ex}}
\newcommand{\border}[1]{\accentset{\smash{\raisebox{-0.12ex}{$\scriptstyle\bullet$}}}{#1}\rule{0pt}{2.3ex}}
\newcommand{\collapse}[2]{\mathrel{\substack{\\[-0.5em]#1}{\searrow_{#2}}}}
\newcommand{\collapsible}{\searrow}
\newcommand{\tendsto}{\longrightarrow}
\newcommand{\A}{\mathbb{A}}
\newcommand{\N}{\mathbb{N}}
\DeclareMathOperator{\pr}{pr}
\DeclareMathOperator{\id}{id}
\DeclareMathOperator{\tinter}{int}
\DeclareMathOperator{\sk}{sk}
\author{Alexey Gorelov}
\title{Geometry of collapsing and free deformation retraction}
\begin{document}

\begin{abstract}
  We show that a compact polyhedron $P$ collapses to a subpolyhedron $Q$ if and only if it admits a
  piecewise-linear free deformation retraction onto $Q$. We also consider further possibilities for
  invariant characterisations of collapsibility in terms of metrics; in this connection, we provide
  a partial correction to Isbell's claim that every injective metric space is freely contractible,
  and present a counterexample to a step in the original argument.
\end{abstract}

\maketitle

\section{Introduction}

One of the central concepts in piecewise-linear topology is \emph{collapsibility}. Its standard
definition is formulated for simplicial complexes and, in the case of polyhedra, is expressed by
requiring the existence of a triangulation that is collapsible. Although several other equivalent
definitions of collapsibility exist (one of which we discuss below), all of them depend on placing
additional combinatorial structure on the polyhedron. This reliance on the existence of specific
combinatorial structures complicates its use in the study of other topological invariants.

For example, one of the main open problems in piecewise linear topology links collapsibility with
contractibility:
\begin{conjecture}[{Zeeman conjecture~\cite{zeeman1963dunce}}]
  If $P$ is a contractible two-dimensional compact polyhedron then $P \times I$ is collapsible (to a
  point).
\end{conjecture}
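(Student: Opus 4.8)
This is Zeeman's conjecture from \cite{zeeman1963dunce}, one of the oldest open problems in PL topology; I do not expect to settle it here, but the main theorem of the paper does reorganise it in a useful way, so I will describe the resulting plan together with the point at which it meets the known hard core of the subject.

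By the main theorem (stated in the abstract), $K\times I$ collapses to a point if and only if $K\times I$ admits a piecewise linear free deformation retraction onto a point; its value for the conjecture is that it trades a rigid simplicial assertion for a more flexible, essentially PL-topological one. Since $K$ is a contractible compact polyhedron, $K\times I$ is contractible, so deformation retractions of $K\times I$ onto a point are plentiful --- the whole difficulty lies in producing one that is simultaneously PL and free. The plan is to build such a retraction from a triangulation $T$ of $K$ by induction on the number of $2$-simplices: at each stage either peel off a cell whose track can be taken freely (combinatorially, across a free face), or first perform an elementary expansion of $K\times I$ transverse to a $2$-simplex so as to create one. The base of the induction is unproblematic: if $T$ is a tree then $K\times I$ collapses directly, and more generally the product of any collapsible complex with $I$ is collapsible, which disposes of every collapsible $K$; for the classical test polyhedra --- the dunce hat and Bing's house with two rooms --- one verifies by hand (as is known) that $K\times I$ expands to a collapsible complex.

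\textbf{The main obstacle} is exactly the inductive step in full generality. A contractible $2$-complex need not have a free face, and deleting an arbitrary $2$-simplex destroys acyclicity, so one is driven into expansion-and-collapse bookkeeping on $K\times I$; but a proof that this bookkeeping always terminates at a point would in particular exhibit a $3$-deformation of $K$ to a point through the moves $K = K\times 0 \nearrow K\times I \searrow \mathrm{pt}$, that is, it would prove the Andrews--Curtis conjecture for $K$. Since Andrews--Curtis is open, with candidate counterexamples in the literature, I expect this to be precisely where the argument cannot currently be completed --- not in any of the explicit base cases. So the honest proposal is: (i) use the main theorem to replace ``$K\times I$ collapses'' by ``$K\times I$ has a PL free deformation retraction to a point''; (ii) exhibit such a retraction whenever $K$ is collapsible, or one of the standard test polyhedra, by making the collapses (equivalently, the free tracks) explicit; and (iii) reduce the remaining cases to a finite expansion-and-collapse sequence on $K\times I$ --- at which stage one runs into the open Andrews--Curtis problem.
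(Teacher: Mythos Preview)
The paper does not prove this statement at all: it is presented as Zeeman's conjecture, cited from~\cite{zeeman1963dunce}, purely as motivation for why a triangulation-invariant characterization of collapsibility is interesting (in particular because the conjecture implies the three-dimensional Poincar\'e conjecture). There is no proof in the paper to compare your proposal against.

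You correctly recognize that the statement is open and do not claim to settle it, which is the right call. Your reformulation via the main theorem --- replacing ``$K\times I$ collapses to a point'' by ``$K\times I$ admits a PL free deformation retraction onto a point'' --- is valid and is indeed one natural payoff of the paper's result. Two remarks on the discussion itself. First, the implication Zeeman $\Rightarrow$ Andrews--Curtis (for contractible $2$-complexes) is intrinsic: \emph{any} proof that $K\times I\searrow\mathrm{pt}$ yields the $3$-deformation $K\nearrow K\times I\searrow\mathrm{pt}$, so the Andrews--Curtis obstruction is not specific to your inductive scheme but blocks every approach equally. Second, your inductive step as written (``perform an elementary expansion of $K\times I$ \ldots\ so as to create a free face'') would, even if it terminated, only show that $K\times I$ \emph{expands} to something collapsible, i.e.\ that $K\times I$ has trivial simple-homotopy type; that is strictly weaker than the conjecture's assertion that $K\times I$ itself collapses. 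For the dunce hat, Zeeman in fact proved the stronger statement that $D\times I$ is collapsible outright, without auxiliary expansions.
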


The significance of this conjecture is highlighted by its connections to other central problems in
topology. For instance, its special case is equivalent to the three-dimensional Poincaré
conjecture~\cite{zeeman1963dunce,equiv_poincare}, proved by Perelman~\cite{perelman1,perelman2},
while another special case is equivalent to the stable Andrews-Curtis conjecture, closely related to
the smooth four-dimensional Poincaré conjecture~\cite{matveev_ac}. As far as the author knows, the
latter two problems remain open. Interested readers may refer to subsection 1.3.4 in~\cite{matveev}
for proofs of both of these equivalences.

In~\cite{isbell64}, Isbell introduced the notion of a \emph{free deformation retraction}: a strict
deformation retraction $h \from X \times I \to X$ is called free if
$h(h(x, t), s) = h(x, \max(t, s))$ for all $s, t \in I$. He proved that a two-dimensional compact
polyhedron $P$ is collapsible if and only if it admits a free deformation retraction to a point.
Independently, Piergallini~\cite{piergallini85} reintroduced this concept under the name
\emph{topological collapsing}. It follows from~\cite{tymchatyn,piergallini85} that the same
equivalence holds for three-dimensional manifolds with boundary.\footnote{{In view of~\cite[Theorem
    23, Corollary 1]{whitehead} or~\cite[Theorem 8, Corollary 1]{zeeman_seminar} and Moise's
    theorem~\cite{moise}, this statement is equivalent to saying that freely contractible
    3-manifolds with boundary are 3-balls.}} However, in dimensions five and higher there exist
non-collapsible polyhedra that are topological balls and hence freely
contractible~\cite{berstein1978contractible}. Thus, in general, collapsibility and free
contractibility are not equivalent.

We show that the equivalence holds in general if the deformation retraction is required to be
piecewise linear. Our main result is the following:
\begin{retheorem}{theorem:main_theorem}
  Let $P$ be a compact polyhedron and let $Q \subseteq P$ be a subpolyhedron. Then $P$ collapses to
  $Q$ if and only if there exists a piecewise-linear free deformation retraction of $P$ onto $Q$.
\end{retheorem}

One can look at this result as an attempt to characterise collapsibility in an invariant
way.\footnote{Note that the piecewise linearity of a map can be defined in an invariant way,
  see~\cite{rourke}.}

It should be noted that this theorem was already stated in~\cite{piergallini85}, but the proof given
there contains a substantial gap: it assumes the existence of triangulations $(T, T_P)$ of
$P \times I$ and $P$ with respect to which both the projection $\pr_P$ and the piecewise-linear free
deformation retraction are simplicial. In general, such triangulations need not exist: for two
piecewise-linear maps $f \from P \to Q$ and $g \from P \to M$ there may be no triangulations of $P$,
$Q$, and $M$ with respect to which both $f$ and $g$ are simplicial, see Example~1 to Theorem~1
in~\cite{zeeman_seminar} or the examples after Lemma~2.17 in~\cite{rourke}.

Another topic considered in this work is the possibility of characterising collapsibility in metric
terms. In~\cite{isbell64}, Isbell claimed that every injectively metrizable metric space is freely
contractible to any of its points. As we explain in the final section, the proof given there
contains a substantial gap. We provide a corrected argument that works under the additional
assumption that the space is proper; in particular, it applies to compact polyhedra:
\begin{retheorem}{theorem:injective_metric}
  Every proper injective metric space is freely deformation contractible to each of its points.
\end{retheorem}

In view of the previous theorem, this naturally leads to the following question:
\begin{question}
  Given a compact polyhedron $P$, what additional conditions on an injective metric on $P$ ensure
  the existence of a piecewise-linear free contraction of $P$?
\end{question}

For cubic complexes, there are metric characterizations of (cubic) collapsibility. For instance, the
following theorem was stated by Melikhov in a talk at the Geometric Topology Seminar of the Steklov
Mathematical Institute:
\begin{theorem*}[\cite{melikhov_talk}]
  Let $C$ be a finite cubical complex, and let $d_{p}$ be the metric on $C$ induced by
  $\ell_{p}$-norms on the cubes of $C$ (considered as unit cubes). Then the following statements are
  equivalent:
  \begin{enumerate}[nosep]
  \item\label{st:median} $(C, d_{1})$ is a median metric space.
  \item\label{st:cat0} $(C, d_{2})$ is a CAT(0) space.
  \item\label{st:injective} $(C, d_{\infty})$ is an injective metric space.
  \item\label{st:collapses} $C$ is (cubically) collapsible.
  \end{enumerate}
\end{theorem*}

\begin{proof}
  The proof is distributed across several papers. Below we provide a diagram in which each
  implication is indicated together with a reference:

  \begin{tikzcd}[column sep=3.2cm,row sep=3.2cm,math mode=false]
    $(C, d_{1})$ is median \arrow[leftrightarrow]{r}{\cite{chepoi} and \cite[\S 3]{vandevel_book}}
    \arrow[leftrightarrow]{d}[swap]{\cite{vandevel_median_collapse}} & \arrow{dl}[swap]{\cite{adiprasito_benedetti,miesch}} $(C, d_{2})$ is CAT(0) \\
    $C$ is collapsible \arrow{r}{\cite{mai_tang}} & $(C, d_{\infty})$ is injective
    \arrow[leftrightarrow]{u}{\cite{miesch}}
  \end{tikzcd}$ $\\\qedhere
\end{proof}

The notion of cubical collapsibility used in this theorem differs from the simplicial collapsibility
considered in our work. Nevertheless, one can show that every simplicially collapsible polyhedron
admits a cubically collapsible cubulation, and conversely, the geometric realization of a cubically
collapsible cubical complex is simplicially collapsible (see~\cite[Section
4.1]{vandevel_median_collapse}). In this sense, the theorem above provides an answer to the question
posed earlier. However, it remains open whether a metric characterization in more invariant terms is
possible.

\subsection{Notation}

Throughout this text, we use capital letters $P$, $Q$, $M$, and $N$ to denote compact polyhedra, and
$A$, $B$, and $C$ to denote simplices. Lowercase letters are used for points and vertices. When no
confusion arises, we also use $B$ for balls; for example, $B^{n}$ is the $n$-ball, and $B(x, r)$ is
the metric ball of radius $r$ centered at $x$.

We write $K$, $L$, and $T$ for simplicial complexes. For a simplicial complex $K$ we denote its
underlying polyhedron by $|K|$. For a polyhedron $P$, we write $T(P)$ for a triangulation of $P$,
that is, a simplicial complex with $|T(P)| = P$. The notation $\sk_i K$ denotes the $i$-skeleton of
a simplicial complex $K$, namely the union of all simplices of dimension at most $i$.

For a simplex $A$, we denote by $\border{A}$ its relative boundary, that is, the union of all proper
faces of $A$. We write $\inter{A}$ for its relative interior $A \setminus \border{A}$. Given a
subspace $Y$ of a space $X$, we denote by $\partial Y$ and $\tinter{Y}$ the topological boundary and
topological interior of $Y$ in $X$, respectively.

\section{Preliminaries}

\subsection{Simplical complexes, polyhedra, and collapsibility}

In this subsection we recall the main notions and fix notation we use throughout the paper. We rely
primarily on~\cite{zeeman_seminar}; the reader may also refer to~\cite{rourke}.

A \emph{(finite affine) simplicial complex} is a finite nonempty collection $K$ of simplices in some
Affine space $\A^n$ such that:
\begin{enumerate}[nosep]
\item If $A \in K$, then every face of $A$ also belongs to $K$;
\item If $A, B \in K$, then either $A \cap B = \varnothing$, or $A \cap B$ is a common face of both $A$ and $B$.
\end{enumerate}

A \emph{(compact affine) polyhedron} $P$ is the underlying space of a simplicial complex $K$, namely
$P = |K| \subset \A^n$. The simplicial complex $K$ is then called a \emph{triangulation} of $P$. We
write $T(P)$ to denote a triangulation of $P$.

A continuous map $f \from P \to Q$ between polyhedra $P \subset \A^{n}$ and $Q \subset \A^{k}$ is
called \emph{piecewise-linear} if its graph in $\A^{n+k}$ is a polyhedron. Equivalently, $f$ is
piecewise-linear if there exist triangulations $T(P)$ and $T(Q)$ of $P$ and $Q$ such that $f$ is
\emph{simplicial}; this means that for every simplex $A \in T(P)$, its image $f(A)$ is a simplex in
$T(Q)$ and $f\rest{A}$ is affine.

Let $K$ be a simplicial complex, and let $A \in K$ be a simplex with a \emph{free face} $B \in K$,
that is, $B$ is a face of $A$ and is not contained in any simplex of $K$ other than $A$. An
\emph{elementary simplicial collapse} of $K$ across $A$ from $B$ is the removal of the simplices $A$
and $B$, resulting in the subcomplex $K' = K \setminus \{A, B\}$. We denote this operation by
$K \collapse{B}{A} K'$.

A simplicial complex $K$ \emph{collapses} to a subcomplex $S \subseteq K$, written
$K \collapsible S$, if there exists a finite sequence of elementary collapses
\[
  K = K_1 \collapse{B_1}{A_1} K_2 \collapse{B_2}{A_2} \cdots \collapse{B_{n-1}}{A_{n-1}} K_n = S.
\]
A polyhedron $P$ \emph{simplicially collapses} to a subpolyhedron $Q \subseteq P$, written
$P \collapsible Q$, if there exists a triangulation $T(P)$ of $P$ such that $Q$ is triangulated by a
subcomplex $T(Q) \subseteq T(P)$ and $T(P) \collapsible T(Q)$.

There is also a notion of \emph{polyhedral collapsibility} for polyhedra, which we will need later.
We say that there exists an \emph{elementary polyhedral collapse} from a polyhedron $P$ to a
subpolyhedron $Q$ if there exists an embedded piecewise-linear $n$-ball $B^{n} \subset P$ such that
\begin{enumerate}[nosep]
\item $P = Q \cup B^{n}$,
\item $Q \cap B^{n}$ is a piecewise-linear $(n-1)$-ball $B^{n-1}$ embedded into the boundary of
  $B^n$ (as a piecewise-linear manifold).
\end{enumerate}
We say that $P$ \emph{collapses} onto $Q$ \emph{polyhedrally} if there is a finite sequence of such
polyhedral collapses starting from $P$ and ending with $Q$.

It is clear that if $P$ collapses simplicially to $Q$ then it also collapses polyhedrally. The
converse also holds: if $P$ collapses polyhedrally to $Q$, then $P$ collapses simplicially to $Q$;
see, for example, Corollary~2 to Theorem~4 in Chapter~3 of~\cite{zeeman_seminar} for a proof. With this equivalence
in mind, we will generally not distinguish between simplicial and polyhedral collapsibility for
polyhedra and will use the same notation $P \collapsible Q$ for both notions.

\subsection{Free deformation retraction}

In this subsection, we define a free deformation retraction and discuss its main properties.

\begin{definition}
  A continuous map $h \from X \times I \to X$ is called a \emph{free deformation retraction} of a
  topological space $X$ onto a subspace $Y \subseteq X$ if the following conditions hold:
  \begin{enumerate}[nosep]
  \item\label{PLretr_def:retr} $h$ is a strong deformation retraction of $X$ onto $Y$, that is
    \[
      h\rest{X \times \{0\}} = \id_{X}, \qquad h(X \times \{1\}) = Y, \qquad h\rest{Y \times \{t\}} = \id_Y,\ \forall t \in I.
    \]
  \item\label{PLretr_def:free} $h(h(x, s), t) = h(x, \max(s, t))$ for all $x \in X$, $s, t \in I$.
  \end{enumerate}
  Setting $h_t(x) \defeq h(x, t)$, the freeness condition~(\ref{PLretr_def:free}) can be expressed
  as $h_t \circ h_s = h_{\max(t, s)}$.
\end{definition}

To build some intuition, we now establish several basic properties of free deformation retractions.

\begin{lemma*}
  Let $h \from X \times I \to X$ be a free deformation retraction of $X$ onto $Y$, and let
  $\gamma_{x}(t) \defeq h(x, t)$ denote the trajectory of a point $x$ during the retraction. Then:
  \begin{enumerate}[nosep]
  \item For every $t \geq s$, $h_{s}\rest{h_{t}(X)} = \id_{h_{t}(X)}$ and $h_{t}(X) \subseteq h_{s}(X)$.
  \item For every $x \in X$ and every $p \in \gamma_{x}(I)$, let
    $t_{p} \defeq \min \gamma_{x}^{-1}(p)$. Then $\gamma_{p}(I) = \gamma_{x}([t_{p}, 1])$. Moreover,
    $\gamma_{p}([0, t_{p}]) = p$ and $\gamma_{p}\rest{[t_{p}, 1]} = \gamma_{x}\rest{[t_{p}, 1]}$.
  \item Define a partial order $\prec$ on $X$ by setting $x \prec y$ whenever $\gamma_{x}(I) \subset
    \gamma_{y}(I)$. Then:
    \begin{enumerate}[nosep]
    \item $(X, \prec)$ is a pseudotree: for every $x$, the lower closure
      $\downarrow\!\{ x \} \defeq \{ x' \mid x' \prec x \}$ is totally ordered.
    \item $Y \subseteq X$ is precisely the set of the minimal elements with respect to $\prec$.
    \end{enumerate}
  \end{enumerate}
\end{lemma*}

\begin{proof}
  For every $t \geq s$ we have $h_s \circ h_t = h_t$, which implies
  $h_{s}\rest{h_{t}(X)} = \id_{h_{t}(X)}$ and hence $h_{t}(X) \subseteq h_{s}(X)$.

  Next, fix $x \in X$ and $p \in \gamma_{x}(I)$, and let $t_{p} = \min \gamma_{x}^{-1}(p)$. Then for
  all $s \in I$ we have
  \[
    \gamma_{p}(s) = h(h(x, t_{p}), s) = \gamma_{x}(\max(s, t_{p})) =
    \begin{cases}
      \gamma_{x}(t_{p}) = p, &s \leq t_{p}, \\
      \gamma_{x}(s) &s \geq t_{p}. \\
    \end{cases}
  \]
  This gives $\gamma_{p}([0, t_{p}]) = p$ and
  $\gamma_{p}\rest{[t_{p}, 1]} = \gamma_{x}\rest{[t_{p}, 1]}$, and hence
  $\gamma_{p}(I) = \gamma_{x}([t_{p}, 1])$.

  Now, consider the partial order $\prec$ on $X$. Take $p$ and $q$ from the lower closure
  $\downarrow\!\{ x \}$ of some $x \in X$. Then $\gamma_{p}(I) = \gamma_{x}([t_{p}, 1])$ and
  $\gamma_{q}(I) = \gamma_{x}([t_{q}, 1])$. If $p$ and $q$ are distinct, one of these segments
  strictly contains the other, making $p$ and $q$ comparable. Thus, $\downarrow\!\{x\}$ is totally
  ordered.

  Finally, if $y \in Y$, then $\gamma_{y}(I) = \{y\}$, so $y$ is minimal under $\prec$. Conversely,
  for any $x \in X \setminus Y$, we have $x \succ \gamma_{x}(1) \in Y$, hence $Y$ is exactly the set
  of minimal elements.
\end{proof}

Thus, a free deformation retraction defines a sort of tree-like structure on $X$ with roots in $Y$,
and contracting the tree starting from the leaves yields the original retraction.

Two ``consecutive'' free deformation retractions can be naturally combined into a single one.

\begin{lemma}\label{lemma:join_two_retractions}
  Let $Z \subseteq Y \subseteq X$ be topological spaces, and let $f \from X \times I \to X$ and
  $g \from Y \times I \to Y$ be free deformation retractions of $X$ onto $Y$ and of $Y$ onto $Z$,
  respectively. Then there exists a free deformation retraction $h \from X \times I \to X$ of $X$
  onto $Z$. Moreover, if $X$, $Y$, and $Z$ are polyhedra and $f$ and $g$ are piecewise-linear, then
  $h$ can be chosen piecewise-linear as well.
\end{lemma}

\begin{proof}
  Define $h \from X \times I \to X$ by
  \[
    h(x, t) = \begin{cases} f(x, 2t), &t\leq \frac12, \\ g(f(x, 1), 2t-1), &t\geq \frac12. \end{cases}
  \]

  Clearly, $h$ is a strong deformation retraction of $X$ onto $Z$. If $f$ and $g$ are
  piecewise-linear, then $h$ is piecewise-linear as well. It remains to verify the freeness
  condition. If $\max(s, t) \leq \frac{1}{2}$ or $\min(s, t) \geq \frac{1}{2}$, this follows
  directly from the freeness of $f$ and $g$, respectively. For the case $s \leq \frac{1}{2} \leq t$, we have
  \[
  \begin{aligned}
    h_s(h_t(x)) &= h(h(x, t), s) = f(g(f(x, 1), 2t-1), 2s) = g(f(x, 1), 2t-1) = h_t(x), \\
    h_t(h_s(x)) &= h(h(x, s), t) = g(f(f(x, 2s), 1), 2t-1) = g(f(x, 1), 2t-1) = h_t(x),
  \end{aligned}
  \]
  showing that $h$ is free.
\end{proof}

\subsection{Shadows in cylinders}

Throughout the text, we will work extensively with subsets of cylinders. This subsection introduces
a few basic concepts that will be used later, while the next section discusses cylindrical
triangulations and cylinderwise collapsing.

\begin{definition}\label{definition:shadow}
  Let $Y \subseteq X \times I$. The \emph{lower shadow} of $Y$ is
  \[
    S_{-}(Y) \defeq \{ (x, t) \in X \times I \mid \exists (x, s) \in Y,\ t \leq s \}.
  \]
  The \emph{upper shadow} of $Y$ is
  \[
    S_{+}(Y) \defeq \{ (x, t) \in X \times I \mid \exists (x, s) \in Y,\ t \geq s \},
  \]
  and the \emph{total shadow} of $Y$ is
  \[
    S(Y) \defeq \{ (x, t) \in X \times I \mid \exists (x, s) \in Y \}.
  \]

  A set $Y \subset X \times I$ is called \emph{downward closed} (respectively \emph{upward closed}) if
  $S_{-}(Y) = Y$ (respectively $S_{+}(Y) = Y$).
\end{definition}

There is a natural partial order on $X \times I$ defined as $(x, s) \leq (y, t)$ if and only if
$x = y$ and $s \leq t$. With respect to this order, the lower and upper shadows of a set $Y$ are
simply its lower and upper closures, respectively.

The following statements immediately follow from the definition:
\begin{lemma}\label{lemma:shadows_properties}
  Let $Y \subseteq X \times I$. Then:
  \begin{enumerate}[nosep]
  \item $Y$ is downward closed if and only if for every point $(y, t) \in Y$, the segment $\{y\} \times [0, t]$ is contained in $Y$.
  \item $Y$ is upward closed if and only if for every point $(y, t) \in Y$, the segment
    $\{y\} \times [t, 1]$ is contained in $Y$.
  \item The complement of a downward closed set in $X \times I$ is upward closed, and vice versa.
  \end{enumerate}
\end{lemma}

\section{Cylinderwise collapsing}

In this section, we discuss cylinderwise collapsing, as introduced in Chapter 7
of~\cite{zeeman_seminar}. The main results of this section follow those in the reference, but we
include the proofs for completeness and because several concepts and intermediate results will be
used later in the text.

We begin by establishing a few preliminary statements from affine geometry.

\begin{lemma}\label{l:lines_simplex}
  Let $A$ be an $n$-simplex embedded in $\A^{n}$, and let $L$ be a line in $\A^{n}$. Suppose that
  $L \cap A$ contains a point $p$ not lying in $\sk_{n-2} A$. Then $L \cap A$ is a non-degenerate
  segment.
\end{lemma}

\begin{proof}
  Since both $L$ and $A$ are convex, their intersection $L \cap A$ is convex and hence either empty,
  a point, or a segment. As $p \notin \sk_{n-2} A$, the point $p$ must belong to the interior of
  $A$, or to the interior of a codimension-one face $B$ of $A$.

  If $p \in \inter{A}$, then $p$ lies in the relative interior of $L \cap A$, so the intersection
  cannot be a single point. Hence $L \cap A$ is a non-degenerate segment.

  Suppose now that $p \in \inter{B}$ for some codimension-one face $B$. Choose barycentric
  coordinates $(\lambda_0, \dots, \lambda_n)$ on $\A^{n}$ with respect to the vertices of $A$ so
  that $B$ is given by $\lambda_0 = 0$. Then $p = (0, \lambda_1, \dots, \lambda_n)$ with
  $\lambda_i > 0$ for all $i = 1, \dots, n$ and $\sum_{i=1}^n \lambda_i = 1$. Let
  $(\alpha_0, \dots, \alpha_n)$ be the direction vector of $L$ in these coordinates, satisfying
  $\sum_{i=0}^n \alpha_i = 0$. After rescaling, we may assume $\alpha_{0} \geq 0$ and
  $\max_{i=1}^{n} |\alpha_{i}| < \min_{i=1}^{n} |\lambda_{i}|$. Therefore, since
  $\lambda_{i} + \alpha_{i} > 0$ for every $i$, the point
  $(\alpha_{0}, \lambda_{1} + \alpha_{1}, \dots, \lambda_{n} + \alpha_{n})$ lies in $L \cap A$,
  implying that this intersection is a non-degenerate segment.
\end{proof}

\begin{lemma}\label{l:linear_map_on_simplex}
  Let $A$ be a simplex and $f \from A \to \A^{m}$ be an affine map. Then exactly one of the
  following holds:
  \begin{enumerate}[nosep]
  \item $f$ is injective, and consequently an embedding.
  \item For any codimension-one face $B$ of $A$ we have $f(A) = f(\border{A} \setminus \inter{B})$.
  \end{enumerate}
  In the second case, $f(A)$ need not be a simplex.
\end{lemma}

\begin{proof}
  Let $n = \dim A$. We may assume that $A$ is embedded into the affine space $\A^{n}$. In this case,
  $\inter{A}$ and $\border{A}$ are the topological interior and boundary of $A$ in $\A^{n}$,
  respectively.

  If $f$ is injective, then $f(A) \neq f(\border{A} \setminus \inter{B})$ for any codimension-one
  face $B$ of $A$. Assume that $f$ is not injective. It suffices to show that
  $f(\inter{A} \cup \inter{B}) \subseteq f(\border{A} \setminus \inter{B})$.

  Take any $p \in \inter{A} \cup \inter{B}$ and consider the fiber $f^{-1}(f(p))$, which is an
  affine subspace of positive dimension in $\A^{n}$ containing $p$. By~\cref{l:lines_simplex}, every
  line $L$ in this subspace through $p$ intersects $A$ in a segment $xy$ with $x$ and $y$ in
  $\border{A}$. It suffices to show that at least one of $x$ and $y$ does not lie in $\inter{B}$.

  Assume, for contradiction, that both $x$ and $y$ lie in $\inter{B}$. Then $xy \subset \inter{B}$,
  so $p \in \inter{B}$, and the entire line $L$ lies in the hyperplane spanned by the face $B$.
  By~\cref{l:lines_simplex}, this implies $xy = L \cap B$ since $xy = L \cap A$. But then
  $x, y \in \border{B}$, contradicting the assumption that $x, y \in \inter{B}$.

  Thus, $f(p) = f(x) = f(y)$ belongs to $f(\border{A} \setminus \inter{B})$ and hence
  $f(\inter{A} \cup \inter{B}) \subseteq f(\border{A} \setminus \inter{B})$.
\end{proof}

\begin{definition}
  Let $P$ be a polyhedron. A triangulation $T$ of $P \times I$ is called \emph{cylindrical} if there
  exists a triangulation $T(P)$ of $P$ such that the projection $\pr_P \from P \times I \to P$ is
  simplicial with respect to $T$ and $T(P)$.
\end{definition}

For a simplex $A \in T(P)$, consider its full preimage under the projection $\pr_P$, that is, the
union of the preimages of $A$ and all its faces. This preimage triangulates the set $|A| \times I$;
we denote the corresponding subcomplex of $T$ by $T_A$.

We call a simplex $B \in T_A$ \emph{principal} if $\pr_P(B) = A$. Since
$\dim(A \times I) = \dim A + 1$, each principal simplex has dimension either $\dim A$ or
$\dim A + 1$. We call the $(\dim A + 1)$-dimensional principal simplices \emph{vertical} and the
$(\dim A)$-dimensional ones \emph{horizontal}. Denote by $\mathrm{Princ}(A)$ the set of principal
simplices of $T_A$.

\begin{lemma}\label{l:princ_orders}
  Let $p \in \inter{A}$. Then:
  \begin{enumerate}[nosep]
  \item For every horizontal simplex $B \in \mathrm{Princ}(A)$, the intersection
    $(\{p\} \times I) \cap B$ consists of a single point lying in $\inter{B}$.
  \item For every vertical simplex $B \in \mathrm{Princ}(A)$, the intersection $(\{p\} \times I) \cap B$
    is a non-degenerate segment, and $(\{p\} \times I) \cap \inter{B} \neq \varnothing$.
  \item The intersections $(\{p\} \times I) \cap \inter{B}$ for $B \in \mathrm{Princ}(A)$ form a
    decomposition of the segment $\{p\} \times I$ into points and intervals. Therefore, the natural
    order of points along $\{p\} \times I$ induces a linear order $\prec_p$ on $\mathrm{Princ}(A)$,
    where for distinct simplices $B_1, B_2 \in \mathrm{Princ}(A)$ we set
    \[
      B_1 <_p B_2 \quad \Longleftrightarrow \quad t_{1} < t_{2} \quad \text{for all } (p, t_{1}) \in
      \inter{B}_1 \cap (\{p\} \times I) \text{ and } (p, t_{2}) \in \inter{B}_2 \cap (\{p\} \times I).
    \]
    With respect to $<_{p}$, the vertical and horizontal simplices in $\mathrm{Princ}(A)$ alternate,
    and every horizontal simplex is a face of a simplex immediately larger and immediately smaller
    than it, whenever such simplices exist.
  \item The order $\prec_p$ is independent of the choice of the point $p \in \inter{A}$.
  \end{enumerate}
\end{lemma}

\begin{proof} Refer to~\cref{fig:order_on_simplexes}.
  \begin{enumerate}
  \item Since $\pr_{P}(B) = A$ and $\dim A = \dim B$, the restriction $\pr_{P}\!\rest{B}$ is an affine
    homeomorphism between $B$ and $A$.
  \item Consider the affine map $\pr_{P}\!\rest{B} \from B \to A$. Since $\dim B = \dim A + 1$, the
    fiber $\pr_{P}^{-1}\!\rest{B}(p) = (\{p\} \times I) \cap B$ is the intersection of a
    one-dimensional affine subspace with $B$, which must be a non-degenerate segment $xy$
    by~\cref{l:lines_simplex}.

    We claim that $xy \cap \inter{B} \neq \varnothing$. Suppose, for contradiction, that
    $xy \subset \border{B}$. Then both the endpoints $x$ and $y$ must lie in a codimension-one face
    $C$ of $B$. Since $p$ lies in the interior of $A$, we have $f(C) = A$. Thus, $C$ is a horizontal
    principal simplex. From the previous point, it follows that $(p \times I) \cap C$ is a single
    point lying in $\inter{C}$. Thus, $(\{p\} \times I) \cap B = \{ x = y \in \inter{C} \}$, which
    contradicts~\cref{l:lines_simplex}.
  \item It suffices to show that every point $(p, t)$ in $\{p\} \times I$ lies in the interior of a
    principal simplex. Let $B \in T_A$ be the simplex of minimal dimension containing $(p, t)$.
    Since $p$ lies in the interior of $A$, we have $f(B) = A$, so $B \in \mathrm{Princ}(A)$. If
    $\dim B = \dim A$, the simplex $B$ is horizontal, and $\pr_{B} \from B \to A$ is a
    homeomorphism, so $(p, t) \in \inter{B}$. Otherwise we have $\dim B = \dim A + 1$ and
    $(p, t) \in \inter{B}$ because $p$ does not lie in any proper face of $B$, by the minimality of
    the dimension of $B$.
  \item Fix $p \in \inter{A}$ and let $\mathrm{Princ}(A) = \{ B_1, \dots, B_{2k+1} \}$ with
    $B_1 \prec_p B_2 \prec_p \dots \prec_p B_{2k+1}$. For $p' \in \inter{A}$, let $\gamma$ be a path
    in $\inter{A}$ from $p$ to $p'$. For each horizontal simplex $B_{2i+1}$, lift $\gamma$ along the
    projection $\pr_P\!\rest{\inter{B}_{2i+1}}$ to obtain a path $\gamma_{2i+1}$ in
    $\inter{B}_{2i+1}$. For each vertical simplex $B_{2i}$, define a path
    $t \mapsto \gamma_{2i}(t) = \frac{1}{2}(\gamma_{2i-1}(t) + \gamma_{2i+1}(t))$ in
    $\inter{B}_{2i}$. Clearly, $\gamma_{2i}$ is a lifting of $\gamma$ along
    $\pr_P\!\rest{\inter{B}_{2i}}$.

    The paths $\gamma_i$ are disjoint and project to $\gamma$ under $\pr_P$. The map
    $t \mapsto (\pr_I \gamma_1(t), \dots, \pr_I \gamma_{2k+1}(t))$ thus defines a path in the
    ordered configuration space of $2k+1$ points in $I$. Since the order of the points must be
    preserved along the path,\footnote{The ordered configuration space of $2k+1$ points in $I$
      deformation retracts onto the symmetric group $S_{2k+1}$.} we conclude
    $B_1 \prec_{p'} B_2 \prec_{p'} \dots \prec_{p'} B_{2k+1}$. \qedhere
  \end{enumerate}
\end{proof}

Before proving the theorem on cylinderwise collapsing, let us make the following simple
observations:
\begin{enumerate}
\item Every simplex $B$ of $T$ is principal in a unique subcomplex $T_{A}$ with $A = f(B)$. Hence,
  given the linear orders $\prec_{A}$ on $\mathrm{Princ}(A)$ for all $A \in T(P)$ as defined
  in~\cref{l:princ_orders}, we can define a partial order
  \[
    \prec\ \defeq\bigsqcup_{A \in T(P)} \prec_{A}
  \]
  on the simplices of $T$ by declaring $B_{1} \prec B_{2}$ whenever $f(B_{1}) = f(B_{2})$ and
  $B_{1} \prec_{f(B_{1})} B_{2}$.
\item For every vertical simplex $B \in \mathrm{Princ}(A)$, there exist precisely two horizontal
  simplices $C, C' \in \mathrm{Princ}(A)$ that are codimension-one faces of $B$, lying respectively
  immediately below and above $B$ with respect to $\prec_{A}$.
\item A subcomplex $S \subseteq T$ is downward closed in $(T, \prec)$ if and only if the underlying
  subpolyhedron $|S| \subseteq P \times I$ is downward closed.
\end{enumerate}

\begin{figure}[tb]
  \begin{tikzpicture}[scale=2,line cap=round]
    \fill[fill=gray!25] (1.5,0) -- (1.5,1.5+2.4) -- (0, 1.5+2.4) -- (0, 0) -- cycle;
    \draw (1.5,0) coordinate (A0)
          -- ++(-1.5,0) coordinate (A1)
          -- ++(1.5,0.75) coordinate (A2)
          -- ++(-1.5,0.6) coordinate (A3)
          -- ++(1.5,0.6) coordinate (A4)
          -- ++(-1.5,0.6) coordinate (A5)
          -- ++(1.5,0.6) coordinate (A6)
          -- ++(-1.5,0.75) coordinate (A7)
          -- ++(1.5,0) coordinate (A8);
    \draw (A0) -- (A8) -- (A7) -- (A1) -- cycle;

    \foreach \i in {0,...,8} {
      \pgfmathparse{mod(\i,2)==0 ? "right" : "left"}
      \edef\place{\pgfmathresult}
      \node[\place] at (A\i) {$a_{\i}$};
    }

    \foreach \i in {0,...,7} {
      \pgfmathparse{mod(\i,2)==0 ? 0.6 : 0.4}
      \xdef\pos{\pgfmathresult}
      
      \path (A\i) -- (A\the\numexpr\i+1\relax) coordinate[pos=\pos] (B\i);
      \fill[blue] (B\i) circle (1pt);
    }
    \draw[blue,thick] (B0) -- (B7);
    \node[below] at (B0) {$(p, 0)$};
    \node[above] at (B7) {$(p, 1)$};
  \end{tikzpicture}
  \centering
  \caption{The blue segment $\{p\} \times I$ intersects the principal horizontal simplices
    $a_i a_{i+1}$ in points and the principal vertical simplices $a_i a_{i+1} a_{i+2}$ along
    segments.}\label{fig:order_on_simplexes}
\end{figure}

\begin{lemma}[{Cylinderwise collapsing~(Corollary~2 to Lemma~45 in~\cite{zeeman_seminar})}]\label{theorem:cylindrical_collapse}
  Let $T$ be a cylindrical triangulation of $P \times I$. Suppose $T_{1}$ and $T_{2}$ are
  subcomplexes of $T$ such that:
  \begin{enumerate}[nosep]
  \item $P \times \{ 0 \} \subseteq |T_{2}| \subseteq |T_{1}|$,
  \item both $T_{1}$ and $T_{2}$ are downward closed in $(T, \prec)$.
  \end{enumerate}

  Then $T_{1}$ collapses onto $T_{2}$ through a sequence of elementary collapses
  $T' \collapse{B}{A} T''$. In each collapse, $B$ is a horizontal simplex which is maximal with
  respect to the order $\prec$ restricted to the current subcomplex $T'$, while $A$ is a vertical
  simplex immediately smaller than $B$ such that $f(A) = f(B)$. In particular, every intermediate
  subcomplex in this sequence remains downward closed.
\end{lemma}

\begin{proof}
  We argue by induction on the number of simplices in $T_{1} \setminus T_{2}$. If $T_{1} = T_{2}$,
  there is nothing to prove. Suppose $T_{1} \neq T_{2}$.

  Consider the simplices in $T_{1} \setminus T_{2}$ of highest dimension. Let $B$ be one of these
  simplices that is maximal among them with respect to the order $\prec$. Define $A \defeq f(B)$.
  Note that $B$ does not lie in $P \times \{0\}$, since $P \times \{0\} \subseteq |T_{2}|$ and
  $B \notin T_{2}$. Hence $B$ is not minimal in $T_{A}$.

  We claim that $B$ is vertical. Indeed, if it is horizontal, then there would exist a vertical
  simplex $B' \in \mathrm{Princ}(A)$ immediately below $B$ with $\dim B' = \dim B + 1$, which would
  necessarily lie in $T_{1}$, contradicting the maximality of the dimension of $B$.

  Let $C$ be the face of $B$ immediately larger than $B$ in $\mathrm{Princ}(A)$. Since
  $B \notin T_{2}$ and $T_{2}$ is downward closed, we have $C \notin T_{2}$. It follows that $C$ is
  the maximal horizontal simplex in $\mathrm{Princ}(A)$ contained in $T_{1}$. Indeed, if there
  exists a horizontal simplex $C' \succ_{A} C$ in $T_{1}$, then all vertical simplices between $C'$
  and $C$ would also belong to $T_{1}$, because $T_{1}$ is downward closed. This would contradict
  the choice of $B$.

  We now show that $T_{1} \collapse{C}{B} T_{1} \setminus \{C, B\}$. It suffices to prove that $C$
  is a free face of $B$ in $T_{1}$.

  Assume for contradiction that there exists another simplex $B'$ in $T_{1}$, distinct from $B$,
  having $C$ as a face. Then $B'$ cannot lie in $\mathrm{Princ}(A)$. Indeed, otherwise we would have
  $B' \succ_{A} B$ and $B' \in T_{1} \setminus T_{2}$, contradicting the choice of $B$. It follows
  that $A' \defeq f(B')$ is a simplex having $A$ as a face. Moreover, the dimension of $A'$ is
  $\dim A + 1 = \dim B'$, and hence $B'$ is a horizontal simplex in $\mathrm{Princ}(A')$.

  Since $C$ does not lie in $P \times \{0\}$, neither does $B'$. Therefore, there exists a vertical
  $(\dim B' + 1)$-simplex immediately below $B'$ in $\mathrm{Princ}(A')$. This simplex lies in
  $T_{1}$ since $T_{1}$ is downward closed. Moreover, it has dimension $\dim B + 1$ and cannot not
  lie in $T_{2}$, as otherwise its faces $B'$ and $C$ would lie in $T_{2}$. This contradicts the
  choice of $B$. Therefore, $C$ is a free face of $B$.

  Consequently, $T_{1} \collapse{C}{B} T_{1} \setminus \{C, B\}$, and the induction hypothesis
  applies to $T_{1} \setminus \{C, B\}$, completing the proof.
\end{proof}

Finally, we state a simple lemma that will be used in the next section.

\begin{lemma}\label{l:triangulate_base}
  Let $T$ be a cylindrical triangulation of $P \times I$. Then the subpolyhedra $P \times \{ 0 \}$
  and $P \times \{ 1 \}$ are triangulated by subcomplexes of $T$.
\end{lemma}

\begin{proof}
  Let $T(P)$ be the triangulation of $P$ such that $\pr_{P} \from T \to T(P)$ is simplicial. We show
  that $P \times \{ 0 \}$ is triangulated by a subcomplex; the proof for $P \times \{ 1 \}$ is the
  same. Pick a point $(x, 0) \in P \times \{ 0 \}$, and choose a simplex $A \in T(P)$ such that
  $x \in \inter{A}$. By~\cref{l:princ_orders}, there exists a simplex $B \in \mathrm{Princ}(A)$ with
  $(x, 0) \in \inter{B}$. Clearly, $B$ must be the minimal simplex in $\mathrm{Princ}(A)$. Since $B$
  is minimal, we have $|B| \subseteq P \times \{ 0 \}$. Hence, the collection of all minimal
  simplices forms a subcomplex that triangulates $P \times \{ 0 \}$.
\end{proof}

\section{Collapsibility and free deformation retraction}

In this section, we prove the main result of this text:

\begin{theorem}\label{theorem:main_theorem}
  Let $P$ be a compact polyhedron and $Q \subseteq P$ a compact subpolyhedron. Then $P$ collapses to
  $Q$ if and only if $P$ freely deformation retracts onto $Q$.
\end{theorem}

\begin{proof}[Proof that if $P \collapsible Q$, then $P$ freely deformation retracts onto $Q$.]
  By~\cref{lemma:join_two_retractions} and the definition of collapsibility, it suffices to show
  that whenever we have an elementary collapse $K \collapse{B}{A} K \setminus \{B, A\}$, the
  realization $|K|$ freely deformation retracts onto $|K \setminus \{B, A\}|$. We construct such a
  retraction $h \from |K| \times I \to |K|$ explicitly. Since $h$ must be a strong deformation
  retraction, it should fix $K \setminus (\inter{A} \sqcup \inter{B})$. Thus, it remains to define
  $h$ on $(\inter{A} \sqcup \inter{B}) \times I$.

  Let us first describe informally how $h_{t} \defeq h(\cdot, t)$ acts on
  $\inter{A} \sqcup \inter{B}$ for $t > 0$. Without loss of generality, assume that in the
  barycentric coordinate system of $A$ the face $B$ is given by $x_1 = 0$, and let $n = \dim A$.
  Consider the plane $P = \{ x_1 = t \}$ with $0 < t < 1$, parallel to $B$. This plane divides $A$
  into two regions: one containing the vertex $a = (1, 0, \dots, 0)$ opposite to $B$, and the other
  containing $B$ itself. On the region containing $a$, we set $h_{t}(x) = x$. On the other region,
  $h_{t}$ projects each point $x \in \inter{A} \sqcup \inter{B}$ along the line orthogonal to $B$
  until it meets either the boundary $\partial A \setminus \inter{B}$ or the plane $P$;
  see~\cref{fig:definiton_h}.

  \begin{figure}[tb]
    \centering
    \begin{subfigure}{0.49\textwidth}
      \tdplotsetmaincoords{100}{100}
      \begin{tikzpicture}[scale=2.5,tdplot_main_coords]
        \coordinate (A) at (1,-1,-1);
        \coordinate (B) at (1,1,1);
        \coordinate (C) at (-1,1,-1);
        \coordinate (D) at (-1,-1,1);

        \coordinate (BP) at ($0.45*(A) + 0.55*(B)$);
        \coordinate (CP) at ($0.45*(A) + 0.55*(C)$);
        \coordinate (DP) at ($0.45*(A) + 0.55*(D)$);

        \draw[black!80,dashed] (C)--(D);
        

        \filldraw[fill=gray!50,opacity=0.4] (A)--(BP)--(CP)--cycle;
        \filldraw[fill=gray!50,opacity=0.4] (A)--(BP)--(DP)--cycle;

        \filldraw[fill=red!30,opacity=0.9] (BP)--(CP)--(DP)--cycle;
        \filldraw[fill=red!30,opacity=0.3] (BP)--(CP)--(C)--(B)--cycle;
        \filldraw[fill=red!30,opacity=0.3] (BP)--(DP)--(D)--(B)--cycle;

        \draw[black] (A)--(B)--(D)--cycle;
        \draw[black] (A)--(B)--(C)--cycle;

        \coordinate (P) at ($0.3333*(BP)+0.3333*(CP)+0.3333*(DP)$);
        \node at (P) {$P$};

        \coordinate (x) at ($0.2*(B)+0.5*(C)+0.3*(D)$);
        \coordinate (hx) at ($0.45*(A)+0.05*(B)+0.35*(C)+0.15*(D)$);
        \fill[blue] (x) circle (1pt) node[right] {$x$};
        \fill[blue] (hx) circle (1pt) node[below left] {$h_{t}(x)$};
        \draw[->, blue, shorten >=3pt] (x) -- (hx);

        \coordinate (y) at ($0.48*(B)+0.1*(C)+0.42*(D)$);
        \coordinate (hy) at ($0.3*(A)+0.38*(B)+0*(C)+0.32*(D)$);
        \fill[blue] (y) circle (1pt) node[right] {$y$};
        \fill[blue] (hy) circle (1pt) node[below left] {$h_{t}(y)$};
        \draw[->, blue, shorten >=3pt] (y) -- (hy);

        \node at ($0.33*(B) + 0.33*(C) + 0.33*(D)$) {$B$};
        
      \end{tikzpicture}
    \end{subfigure}
    \begin{subfigure}{0.49\textwidth}
      \begin{tikzpicture}[scale=3,line cap=round]
        \coordinate (B) at (60:1);
        \coordinate (C) at (-60:1);
        \coordinate (A) at (180:1);

        \node[below left] at (A) {$a$};

        \coordinate (BP) at ($0.45*(A) + 0.55*(B)$);
        \coordinate (CP) at ($0.45*(A) + 0.55*(C)$);
        
        \filldraw[draw=black,fill=gray!20] (BP)--(A)--(CP);
        \draw[draw=red,line width=0.5mm] (BP)--(B);
        \draw[draw=red,line width=0.5mm] (CP)--(C);
        \draw[draw=red,line width=0.5mm] (BP)--(CP);
        \draw[dashed] (B)--(C);
        \node[right] at ($(B)!0.5!(C)$) {$B$};
        \node[right] at ($(BP)!0.5!(CP)$) {$P$};

        \coordinate (x) at ($0.35*(B)+0.65*(C)$);
        \coordinate (hx) at ($0.45*(A)+0.125*(B)+0.425*(C)$);
        \fill[blue] (x) circle (1pt) node[right] {$x$};
        \fill[blue] (hx) circle (1pt) node[above left] {$h_{t}(x)$};
        \draw[->, blue, dashed, shorten >=3pt] (x) -- (hx);

        \coordinate (y) at ($0.85*(B)+0.15*(C)$);
        \coordinate (hy) at ($0.3*(A)+0.7*(B)+0*(C)$);
        \fill[blue] (y) circle (1pt) node[right] {$y$};
        \fill[blue] (hy) circle (1pt) node[above left] {$h_{t}(y)$};
        \draw[->, blue, dashed, shorten >=3pt] (y) -- (hy);
      \end{tikzpicture}
    \end{subfigure}
    
    \caption{The action of $h_{0.45}$ on a 3-simplex (left) and on a 2-simplex (right). The image of
      the region containing $B$ is shown in red.}\label{fig:definiton_h}
  \end{figure}

  In barycentric coordinates, assuming $B$ is defined by $x_1 = 0$, we can write
  \[
    h((x_{1}, \dots, x_{n+1}), t) = \begin{cases}
      (x_1, \dots, x_{n+1}), &x_1 \geq t, \\
      \left(t, x_2 - \frac1n(t - x_1), \dots, x_{n+1} - \frac1n(t - x_1)\right), &x_1 \leq t \text{ and } x_{m} \geq \frac1n(t - x_1), \\
      (x_{1} + nx_m, x_2 - x_m, \dots, x_{n+1} - x_m), &x_1 \leq t \text{ and } x_m \leq \frac1n(t -
                                                         x_1),
    \end{cases}
  \]
  where $x_m \defeq \min(x_2, \dots, x_{n+1})$. We may assume that $A$ is embedded in $\A^{n}$ and
  hence $A \times I$ in $\A^{n+1}$. The defining conditions of $h$ then partition $\A^{n+1}$ into
  convex regions on each of which $h$ is affine, hence $h$ is piecewise linear. Is is
  straightforward to check, and it is also clear from the informal description provided above, that
  $h$ is free.
\end{proof}

We prove the converse implication through a sequence of lemmas. First, we introduce some additional
notation and auxiliary constructions.

\begin{figure}[tb]
  \centering
  \begin{overpic}[width=0.4\textwidth]{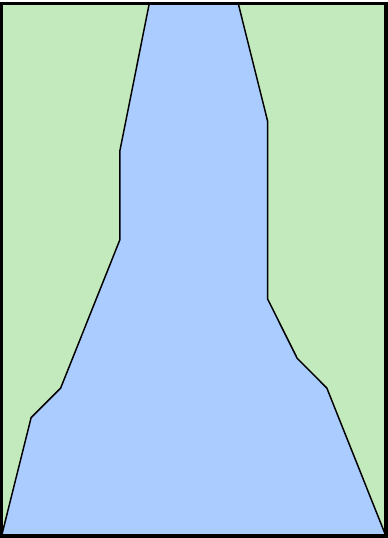}
    \put(38,25){\makebox(0,0){$M \defeq H(P \times I)$}}
    \put(12,80){\makebox(0,0){$N$}}
  \end{overpic}
  \caption{The sets $M$ and $N$ inside $P \times I$.}\label{fig:M_N}
\end{figure}

Let $h$ be a piecewise-linear free deformation retraction of $P$ onto $Q \subseteq P$. Define
$H \from P \times I \to P \times I$ as $H(x,t) \defeq (h(x,t), t)$, so that $H = h \times \pr_I$.
Clearly, $H$ is piecewise-linear, and hence the subset $M \defeq H(P \times I)$ is a compact
subpolyhedron of $P \times I$; see~\cref{fig:M_N}.

\begin{lemma}\label{l:MF_prop}
  The following holds:
  \begin{enumerate}[nosep]
  \item\label{l:M_incl} $P \times \{ 0 \} \subset M$, $Q \times I \subseteq M$, and
    $M \cap (P \times \{ 1 \}) = Q \times \{ 1 \}$.
  \item\label{l:M_downclosed} Both $M$ and its topological interior $\tinter M$ taken in
    $P \times I$ are downward closed.
  \item\label{l:F_prop} $H$ is a level-preserving retraction of $P \times I$ onto $M$ such that for
    every $(x, t) \in \tinter M$ we have $H^{-1}((x, t)) = \{(x, t)\}$.
  \end{enumerate}
\end{lemma}

\begin{proof}
  Statement~(\ref{l:M_incl}) follows directly from the definition of $M$.

  To prove~(\ref{l:M_downclosed}), we first show that if $(x, t) \in M$, then
  $\{ x \} \times [0, t] \subseteq M$. For any $s \leq t$, since $(x, t) \in M$, there exists
  $y \in P$ such that $h(y, t) = x$. Then
  \[
    H(x, s) = (h(x, s), s) = (h(h(y, t), s), s) = (h(y, t), s) = (x, s) \in M.
  \]
  Now suppose $(x, t) \in \tinter M$. Then there exists an open neighborhood of the form
  $U_x \times (t - \delta, t + \delta) \subseteq M$ for some $\delta > 0$ and an open neighborhood
  $U_x$ of $x$ in $P$. Since $M$ is downward closed, for every $(x, s)$ with $0 \leq s \leq t$ the
  open neighborhood $U_{x} \times ([0, 1] \cap (s - \delta, s + \delta))$ is contained in $M$,
  implying $(x, s) \in \tinter M$. Hence $\tinter M$ is downward closed.
 
  By construction, $H$ preserves levels: for each $t$, we have
  $H(P \times \{ t \}) \subseteq P \times \{ t \}$. Moreover,
  \[
    H(H(x, t)) = (h(h(x, t), t), t) = (h(x, t), t) = H(x, t),
  \]
  hence $H$ is a retraction.

  To prove that $H^{-1}(x, t) = \{(x, t)\}$ for every $(x, t) \in \tinter M$, we argue by
  contradiction. Suppose that there exists $(x, t) \in \tinter M$ such that $|H^{-1}(x, t)| > 1$;
  equivalently, there exists $y \neq x$ with $h(y, t) = x$. Define
  $\hat{t} \defeq \inf \{ s \in [0, 1] \mid h(y, s) = x \}$. By continuity of $h$, we obtain
  $h(y, \hat{t}) = x$. Since $h(y, 0) = y$, it follows that $0 < \hat{t} \leq t$. As $\tinter M$ is
  downward closed, we have $(x, \hat{t}) \in \tinter M$. Let
  $U \defeq U_x \times (\hat{t} - \epsilon, \hat{t} + \epsilon) \subset \tinter M$ be an open
  neighborhood of $(x, \hat{t})$, where $U_{x}$ is a neighborhood of $x$ in $P$.

  Consider the curve $\psi \from [0, \hat{t}] \to P$ given by $\psi(s) = h(y, s)$. Since
  $\psi(\hat{t}) = x$, the preimage $\psi^{-1}(U_{x})$ contains a segment $[t_0, \hat{t}]$ with
  $t_0 < \hat{t}$. Set $z \defeq \psi(t_{0}) \in U_{x}$. As $t_{0} < \hat{t}$, we have $z \neq x$.
  Moreover, $(z, \hat{t}) \in U \subset M$, and hence $H(z, \hat{t}) = (z, \hat{t})$ because $H$ is
  a retraction onto $M$. On the other hand,
  \[
    h(z, \hat{t}) = h(h(y, t_0), \hat{t}) = h(y, \hat{t}) = x,
  \]
  so $H(z, \hat{t}) = (x, \hat{t}) \neq (z, \hat{t})$, a contradiction. Therefore,
  $H^{-1}(x, t) = \{ (x, t) \}$ for every $(x, t) \in \tinter M$.
\end{proof}

\begin{figure}[tb]
  \centering
  \begin{subfigure}{0.53\textwidth}
    \centering
    \raisebox{-0.5\height}{\begin{overpic}[width=\textwidth]{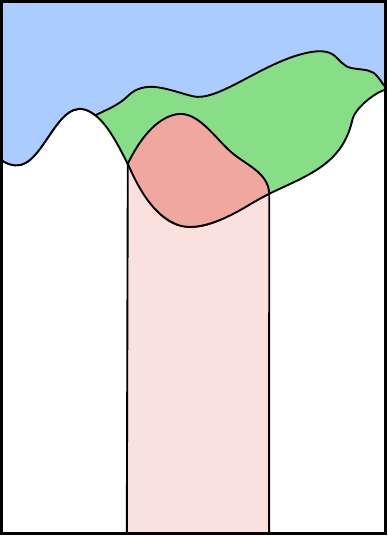}
      \put(35,92){\makebox(0,0){$Z$}}
      \put(52,78){\makebox(0,0){$Y \setminus Z$}}
      \put(37,66){\makebox(0,0){$Y \setminus h^{-1}(h(Z))$}}
      \put(37,40){\makebox(0,0){$S_{-}(Y \setminus h^{-1}(h(Z)))$}}
    \end{overpic}}
  \end{subfigure}
  \begin{subfigure}{0.45\textwidth}
    \centering
    \raisebox{-0.5\height}{\begin{overpic}[width=0.85\textwidth]{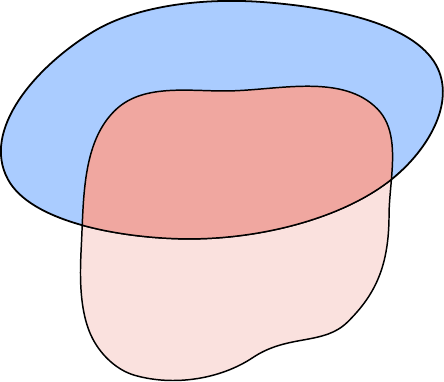}
      \put(52,76){\makebox(0,0){$h(Z)$}}
      \put(52,50){\makebox(0,0){$h(Y) \setminus h(Z)$}}
      \put(52,22){\makebox(0,0){$h(S_{-}(Y \setminus h^{-1}(h(Z))))$}}
    \end{overpic}}
  \end{subfigure}
  \caption{Illustration for~\cref{l:path_order}. The left picture shows $X \times I$ with the sets
    $Z \subset Y$, $Y \setminus h^{-1}(h(Z))$, and the downward closure of
    $Y \setminus h^{-1}(h(Z))$. The right picture shows the images of these sets under $h$. The
    lemma claims that the pink set in the image does not intersect the blue set representing
    $h(Z)$.}\label{fig:path_order}
\end{figure}

\begin{lemma}\label{l:path_order}
  Suppose $h \from X \times I \to X$ is a free deformation retraction (not necessarily
  piecewise-linear), and $Z \subseteq Y \subseteq X \times I$ are upward closed subsets. Then
  \[
    h(S_{-}(Y \setminus h^{-1}(h(Z))) \cap h(Z) = \varnothing,
  \]
  where $S_{-}(\cdot)$ denotes the downward closure operation in $X \times I$.
\end{lemma}

Since the statement of the lemma may seem somewhat intimidating, let us unpack its meaning; see
also~\cref{fig:path_order} for an illustration. Fix a point $x \in X$ and write
$\phi(t) \defeq h(x, t)$ for its trajectory under $h$. Because $Y$ and $Z$ are upward closed, their
intersections with $\{ x \} \times I$ form upper sets in $I$, that is, each intersection is either a
half-open interval $\{ x \} \times (s, 1]$ or a closed subsegment $\{ x \} \times [s, 1]$. The lemma
then states the following. If for some $t \in I$ the point $(x, t)$ belongs to
$Y \setminus h^{-1}(h(Z))$, that is, $(x, t) \in Y$ but $\phi(t) \notin h(Z)$, then the entire
initial part of the trajectory $\phi([0, t]) = \{ h(x, s) \mid 0 \leq s \leq t \}$ also avoids
$h(Z)$.

\begin{proof}[Proof of~\cref{l:path_order}]
  Let $(x, t) \in Y \setminus h^{-1}(h(Z))$, that is, $(x, t) \in Y$ and $h(x, t) \notin h(Z)$. We
  need to show that $h(x \times [0, t]) \cap h(Z) = \varnothing$. Assume, for contradiction, that
  there exists $s \in [0, t]$ such that $h(x, s) = h(y, r)$ for some $(y, r) \in Z$. Since $h$ is a
  free deformation retraction, we have
  \[
    h(x, t) = h(h(x, s), t) = h(h(y, r), t) = \begin{cases} h(y, r) &r \geq t, \\ h(y, t) &r \leq t. \end{cases}
  \]
  In both cases, we have $h(x, t) \in h(Z)$ because $(y, r) \in Z$, and $Z$ is upward closed so
  $(y, t) \in Z$ whenever $r \leq t$. Therefore, we conclude that
  $h(x \times [0, t]) \cap h(Z) = \varnothing$. Since this holds for every
  $(x, t) \in Y \setminus h^{-1}(h(Z))$, the lemma follows.
\end{proof}

\begin{lemma}\label{l:top_interior}
  Let $S \subseteq T$ be simplicial complexes. For a simplex $A \in T$, let $F(A)$ denote the set of
  all simplices of $T$ that have $A$ as a face; in particular, $A \in F(A)$. Then
  \[
    \tinter_{|T|} |S| = \bigsqcup \{ \inter{A} \mid F(A) \subseteq S \}.
  \]
\end{lemma}

\begin{proof}
  Take $x \in \tinter_{|T|} |S|$, and let $A \in T$ be the unique simplex with $x \in \inter{A}$.
  Choose an open neighborhood $U_x \subset |S|$ of $x$ in $|T|$. For each $B \in F(A)$, we have
  $x \in B$, so $U_x \cap B$ is a nonempty open subset of $B$. Since $\inter{B}$ is dense in $B$, it
  follows that $U_x \cap \inter{B} \neq \varnothing$. Pick $y \in U_x \cap \inter{B}$. Then $B$ is
  the unique simplex of $T$ containing $y$ in its relative interior. But $y \in U_x \subset |S|$, so
  $B \in S$. As this holds for all $B \in F(A)$, we obtain $F(A) \subseteq S$, and therefore
  $x \in \bigcup \{ \inter{A} \mid F(A) \subseteq S \}$.

  Conversely, let $x \in \inter{A}$ with $F(A) \subseteq S$. Set $C(A) \defeq T \setminus F(A)$. If
  $B \in C(A)$ and $D$ is a face of $B$, then $A$ is not a face of $D$, so $D \in C(A)$. Hence
  $C(A)$ is a subcomplex of $T$. Therefore $|C(A)|$ is a closed subpolyhedron of $|T|$ not
  containing $x$, and $|T| \setminus |C(A)|$ is an open neighborhood of $x$ in $|T|$. Since
  $|C(A)| = \bigcup_{B \in C(A)} |B|$, a simplex $B \in T$ meets $|T| \setminus |C(A)|$ only if
  $B \notin C(A)$, that is, $B \in F(A)$. Therefore, $|T| \setminus |C(A)| \subseteq |S|$, and we
  conclude that $|T| \setminus |C(A)| \subseteq |S|$, so $x \in \tinter_{|T|} |S|$.

  Since relative interiors of distinct simplices are disjoint, the union above is disjoint,
  which completes the proof.
\end{proof}

\begin{lemma}\label{l:collapse_image}
  Let $S \subset K$ be simplicial complexes, and let $f \from |K| \to P$ be a piecewise-linear map
  to a polyhedron $P$ which is affine on the simplices of $K$ with respect to a triangulation $T(P)$
  of $P$; that is, for every simplex $A \in K$ there exists a simplex $B \in T(P)$ such that
  $f(|A|) \subseteq B$ and $f\rest{|A|}$ is affine.

  Suppose that $f(|S|) \cap f(|K| \setminus |S|) = \varnothing$ and $f\rest{|K| \setminus |S|}$ is
  injective. Then, if $K$ simplicially collapses to $S$, the polyhedron $f(|K|)$ collapses to
  $f(|S|)$.
\end{lemma}

\begin{proof}
  First, consider the case of a single elementary collapse $K \collapse{B}{A} S$. Then
  $|K| = |S| \sqcup \inter{A} \sqcup \inter{B}$. By the assumptions,
  $f(\inter{A} \sqcup \inter{B}) \cap f(|S|) = \varnothing$. In particular, since
  $\border{A} \setminus \inter{B} \subseteq |S|$, we have
  $f(A) = f(\inter{A} \sqcup \inter{B}) \sqcup f(\border{A} \setminus \inter{B}) \neq f(\border{A}
  \setminus \inter{B})$. By~\cref{l:linear_map_on_simplex}, $f\rest{A}$ is an affine embedding of
  $A$ into a simplex in $T(P)$. Therefore, $f(A)$ is a piecewise-linear $(\dim A)$-ball, and
  $f(A) \cap f(|S|) = f(\border{A} \setminus \inter{B})$ is a piecewise-linear $(\dim A - 1)$-ball.
  It follows that $f(|K|) = f(|S|) \cup f(A)$ collapses polyhedrally to $f(|S|)$.

  Now suppose $K$ collapses to $S$ by $n > 1$ elementary collapses. Let the first elementary
  collapse be $K \collapse{B}{A} S'$. Then we can write
  \[
    |K| = (\inter{B} \sqcup \inter{A}) \sqcup |S'| = \underbrace{(\inter{B} \sqcup \inter{A}) \sqcup (|S'|
    \setminus |S|)}_{|K| \setminus |S|} \sqcup |S|.
  \]

  Using the assumptions of the lemma, we obtain
  \[
  \begin{array}{lll}
    f(|K|) &= f((\inter{B} \sqcup \inter{A}) \sqcup (|S'| \setminus |S|) \sqcup |S|)    & \\[0.5em]
           &= f((\inter{B} \sqcup \inter{A}) \sqcup (|S'| \setminus |S|)) \sqcup f(|S|) &\quad\text{(since $f(|S|) \cap f(|K| \setminus |S|) = \varnothing$)} \\[0.5em]
           &= f(\inter{B} \sqcup \inter{A}) \sqcup f(|S'| \setminus |S|) \sqcup f(|S|). &\quad\text{(since $f\rest{|K| \setminus |S|}$ is injective)}
  \end{array}
  \]

  Thus the hypotheses of the lemma apply to the pairs $S' \subset K$ and $S \subset S'$. Indeed,
  $f(|K| \setminus |S'|) = f(\inter{A} \sqcup \inter{B})$ is disjoint from $f(|S'|)$, and
  $f(|S'| \setminus |S|)$ is disjoint from $f(|S|)$. Moreover, the restrictions
  $f\rest{|K| \setminus |S'|}$ and $f\rest{|S'| \setminus |S|}$ are injective, since
  $f\rest{|K| \setminus |S|}$ is injective. By the induction hypothesis, $f(|K|)$ collapses to
  $f(|S'|)$, and $f(|S'|)$ collapses to $f(|S|)$. Consequently, $f(|K|)$ collapses to $f(|S|)$.
\end{proof}

\begin{proof}[Proof of~\cref{theorem:main_theorem}]
  Recall that $h$ is a piecewise-linear free deformation retraction of $P$ onto $Q \subseteq P$,
  $H \from P \times I \to P \times I$ is defined by $H(x,t) \defeq (h(x,t), t)$, and
  $M \defeq H(P \times I) \subseteq P \times I$.
  
  First, we choose triangulations of $P \times I$ and $P$ as follows:
  \begin{enumerate}[nosep]
  \item Triangulate $P \times I$ so that $M$ is triangulated by a subcomplex.
  \item Triangulate $P$, and subdivide the triangulation of $P \times I$ from the previous step so
    that $h$ becomes simplicial.
  \item Subdivide the triangulations of $P \times I$ and $P$ so that the projection
    $\pr_{P} \from P \times I \to P$ is simplicial.
  \end{enumerate}

  Let $T(P \times I)$ denote the final triangulation of $P \times I$. By construction,
  $T(P \times I)$ is cylindrical, and $h$ is affine on every simplex of $T(P \times I)$ in the sense
  required by~\cref{l:collapse_image}. Let $T(M)$ be the subcomplex of $T(P \times I)$ that
  triangulates $M$.

  Define
  \[
    N \defeq ((P \times I) \setminus \tinter M) \cup (P \times \{1\}).
  \]
  We first verify that $N$ is upward closed. Since $\tinter M$ is downward closed by
  \crefitem{l:MF_prop}{l:M_downclosed}, it follows from~\cref{lemma:shadows_properties} that its
  complement is upward closed. As $P \times \{ 1 \}$ is trivially upward closed, we conclude that $N$
  is upward closed.

  Next, we claim that $N$ is a subpolyhedron of $P \times I$, triangulated by a subcomplex of
  $T(P \times I)$. Because $T(P \times I)$ is cylindrical, the subpolyhedron $P \times \{ 1 \}$ is
  triangulated by a subcomplex $T(P \times \{ 1 \})$ by~\cref{l:triangulate_base}. Thus, it suffices
  to show that $(P \times I) \setminus \tinter M$ is triangulated by a subcomplex.

  Define
  \[
    T' := \{ A \in T(P \times I) \mid A \cap \tinter M = \varnothing \} \subseteq T(P \times I).
  \]
  Clearly, $|T'| \subseteq (P \times I) \setminus \tinter M$.

  For the reverse inclusion, let $(x, t) \notin \tinter M$, and let $A \in T(P \times I)$ be the
  unique simplex such that $(x, t) \in \inter A$. We claim that $A \cap \tinter M = \varnothing$,
  which implies $(x, t) \in |T'|$.

  Suppose, for contradiction, that $A \cap \tinter M \neq \varnothing$. Since $M$ is triangulated by
  a subcomplex of $T(P \times I)$, by~\cref{l:top_interior}, its topological interior is a union of
  relative interiors of simplices. Hence $A \cap \tinter M \neq \varnothing$ implies
  $\inter A \subseteq \tinter M$. In particular, $(x, t) \in \inter A \subseteq \tinter M$,
  contradicting our assumption.

  Therefore, $N$ is triangulated by a subcomplex of $T(P \times I)$, which we denote by $T(N)$.

  We now show that $h(P \times \{ 1 \}) = Q$ and $h(N) = P$. The first equality follows directly
  from the fact that $h$ is a deformation retraction. Since $P \times \{ 1 \} \subset N$, it
  suffices to show that for every $x \notin Q$ there exists $(x, t) \notin \tinter M$ with
  $h(x, t) = x$.

  Fix $x \notin Q$ and consider the segment $\{ x \} \times [0, 1]$. Because $h(x, 0) = x$, the
  point $(x, 0)$ lies in $M$, so the set $\{ s \in [0,1] \mid (x, s) \in M \}$ is nonempty.
  Moreover, since $h(x, 1) \in Q$ and $x \notin Q$, we have $h(x, 1) \neq x$, and hence
  $(x, 1) \notin M$. Therefore this set is a proper subset of $[0,1]$. Let
  $t \defeq \sup \{ s \mid (x, s) \in M \}$. Since $M$ is closed, $(x, t) \in M$, and we must have
  $(x, t) \in \partial M = M \setminus \tinter M$. By \crefitem{l:MF_prop}{l:F_prop}, it follows
  that $h(x, t) = x$. This proves $h(N) = P$.

  By~\cref{theorem:cylindrical_collapse}, the subcomplex $T(N)$ collapses cylinderwise to
  $T(P \times \{1\})$. Let
  \[
    T(N) = T_{0} \collapse{B_{0}}{A_{0}} T_{1} \collapse{B_{1}}{A_{1}} \dots
    \collapse{B_{n-1}}{A_{n-1}} T_{n} = T(P \times \{1\})
  \]
  be the corresponding sequence of elementary collapses. Since $h(|T_{0}|) = P$ and
  $h(|T_{n}|) = Q$, to show that $P \collapsible Q$ it suffices to prove that
  \[
    h(|T_{i}|) \collapsible h(|T_{i+1}|) \quad\text{for each elementary collapse } T_{i}
    \collapse{B_{i}}{A_{i}} T_{i+1}.
  \]

  We now prove this. Clearly, we may assume that $h(|T_{i}|) \neq h(|T_{i+1}|)$, since otherwise the
  claim is trivial. Our plan is to apply~\cref{l:collapse_image}; however, we cannot guarantee that
  $h(|T_{i}| \setminus |T_{i+1}|)$ is disjoint from $h(|T_{i+1}|)$, because $h$ is not simplicial
  but only affine on the simplices of $T(P \times I)$. The following construction resolves this
  difficulty.

  Let
  \[
    L \defeq |T_{i}| \setminus h^{-1}\left(h(|T_{i+1}|)\right).
  \]
  Informally, $L$ is the part of $|T_{i}|$ responsible for the change in the image during the collapse
  $T_{i} \collapse{B_{i}}{A_{i}} T_{i+1}$.

  \begin{figure}[tb]
    \begin{subfigure}{0.49\textwidth}
      \centering
      \begin{overpic}[height=6cm]{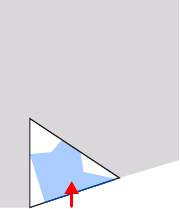}
        \put(30,18){\makebox(0,0){$L$}}
        \put(45,70){\makebox(0,0){$|T_{i+1}|$}}
      \end{overpic}
    \end{subfigure}%
    \begin{subfigure}{0.49\textwidth}
      \centering
      \begin{overpic}[height=6cm]{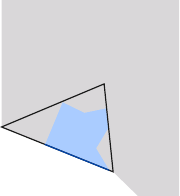}
        \put(40,30){\makebox(0,0){$h(L)$}}
        \put(40,80){\makebox(0,0){$h(|T_{i+1}|)$}}
      \end{overpic}
    \end{subfigure}
    \caption{A step in the proof of~\cref{theorem:main_theorem}. The left picture shows the simplex
      $A_{i}$ inside $|T_{i}|$. The collapse is from the bottom face toward the faces meeting the grey
      region $|T_{i+1}|$. The right picture shows the images of $A_{i}$ and of the set
      $L$.}\label{fig:collapsing}
  \end{figure}
  
  \begin{description}
  \item[\hypertarget{c:incl}{Claim 1}] $L \subseteq \inter{A}_{i} \sqcup \inter{B}_{i}$.
    \nopagebreak[3]
    
    Indeed, if $(x, t) \in L$ then by definition $h(x, t) \notin h(|T_{i+1}|)$. Hence $(x, t) \notin |T_{i+1}|$,
    and therefore
    \[
      (x, t) \in |T_{i}| \setminus |T_{i+1}| = \inter A_{i} \sqcup \inter B_{i}.
    \]
    
  \item[\hypertarget{c:identical_image}{Claim 2}] $h(|T_i| \setminus L) = h(|T_{i+1}|)$
    \nopagebreak[3]

    Since $|T_{i+1}| \subseteq |T_{i}| \setminus L$ by~\hyperlink{c:incl}{Claim~1}, we have
    $h(|T_{i+1}|) \subseteq h(|T_{i}| \setminus L)$. For the reverse inclusion, let
    $y \in h(|T_{i}| \setminus L)$. Then $y = h(x, t)$ for some $(x, t) \in |T_{i}| \setminus L$. By
    definition of $L$, the condition $(x, t) \notin L$ means that $h(x, t) \in h(|T_{i+1}|)$. Thus
    $y \in h(|T_{i+1}|)$, and the equality follows.
    
  \item[Claim 3] $|T_{i}| \setminus L$ is a compact subpolyhedron.
    \nopagebreak[3]

    Note that $|T_{i}| \setminus L = h^{-1}(h(|T_{i+1}|)) \cap |T_{i}|$. Indeed,
    by~\hyperlink{c:identical_image}{Claim~2}, we have
    $|T_i| \setminus L \subseteq h^{-1}(h(|T_{i+1}|)) \cap |T_i|$. Conversely, if
    $(x, t) \in h^{-1}(h(|T_{i+1}|)) \cap |T_i|$, then $h(x, t) \in h(|T_{i+1}|)$, so
    $(x, t) \notin L$ by the definition of $L$, and hence $(x, t) \in |T_i| \setminus L$. Thus, we
    have
    \[
      |T_{i}| \setminus L = h^{-1}(h(|T_{i+1}|)) \cap |T_{i}| = h\rest{|T_{i}|}^{-1}(h(|T_{i+1}|)).
    \]
    Since $h(|T_{i+1}|)$ is a compact subpolyhedron of $P$ and $h\rest{|T_{i}|}$ is a
    piecewise-linear map, this preimage is a compact subpolyhedron.
    
  \item[Claim 4] $h\rest{L}$ is injective.
    \nopagebreak[3]

    We have $|T_{i}| = |T_{i+1}| \cup A_{i}$, hence $h(|T_{i}|) = h(|T_{i+1}|) \cup h(A_{i})$. Since
    $h(|T_{i}|) \neq h(|T_{i+1}|)$, the set $h(A_{i})$ is not contained in $h(|T_{i+1}|)$. However,
    $\border{A}_{i} \setminus \inter{B}_{i} \subseteq |T_{i+1}|$. Thus
    $h(A_{i}) \neq h(\border{A}_{i} \setminus \inter{B}_{i})$. Because $h$ is affine on $A_{i}$, it
    follows from~\cref{l:linear_map_on_simplex} that $h\rest{A_{i}}$ is injective. Since
    $L \subseteq A_{i}$ by~\hyperlink{c:incl}{Claim~1}, the restriction $h\rest L$ is injective.
    
  \item[Claim 5] $|T_{i}| \setminus L$ is upward closed and $h(|T_{i}| \setminus L) \cap h(L) =
    \varnothing$.
    \nopagebreak[3]

    Recall that $L = |T_{i}| \setminus h^{-1}\left(h(|T_{i+1}|)\right)$. Applying
    \cref{l:path_order} to the inclusion $|T_{i+1}| \subset |T_{i}|$ and the map $h$ yields
    $h(S_{-}(L)) \cap h(|T_{i+1}|) = \varnothing$, where $S_{-}(L)$ denotes the downward closure of
    $L$.

    Thus $h(S_{-}(L) \cap |T_{i}|)$ is disjoint from $h(|T_{i+1}|)$, and consequently
    $h(S_{-}(L) \cap |T_{i}|)$ is a subset of $h(|T_{i}|) \setminus h(|T_{i+1}|)$. This implies
    $S_{-}(L) \cap |T_{i}| \subseteq L$. Since $L \subseteq S_{-}(L)$ and $L \subseteq |T_{i}|$, we
    obtain $L = S_{-}(L) \cap |T_{i}|$.

    Therefore,
    \[
      |T_{i}| \setminus L = |T_{i}| \setminus (S_{-}(L) \cap |T_{i}|) = |T_{i}| \setminus S_{-}(L) =
      |T_{i}| \cap ((P \times I) \setminus S_{-}(L)).
    \]
    Since $S_{-}(L)$ is downward closed, its complement in $P \times I$ is upward closed. Moreover,
    $|T_{i}|$ is upward closed because the collapsing is cylinderwise. Therefore, their intersection
    $|T_{i}| \setminus L$ is upward closed.

    Finally, by~\hyperlink{c:identical_image}{Claim 2} we have
    $h(|T_{i}| \setminus L) = h(|T_{i+1}|)$. Thus, $h(L) = h(S_{-}(L) \cap |T_{i}|)$ is disjoint
    from $h(|T_{i+1}|) = h(|T_{i}| \setminus L)$, completing the proof.
  \end{description}

  Now we are almost ready to apply~\cref{l:collapse_image}. To do so, we refine the triangulation
  $T(P \times I)$ in two stages. First, we subdivide it so that $|T_{i}| \setminus L$ is
  triangulated by a subcomplex. Second, we further subdivide the resulting triangulation so that the
  projection $\pr_{P}$ becomes simplicial with respect to some triangulation of $P$.

  Let $\hat{T}(P \times I)$ denote the final triangulation. In $\hat{T}(P \times I)$, the subcomplex
  $\hat{T}(|T_{i}|)$ triangulating $|T_{i}|$ collapses simplicially to the subcomplex
  $\hat{T}(|T_{i}| \setminus L)$ triangulating $|T_{i}| \setminus L$
  by~\cref{theorem:cylindrical_collapse}. Moreover,
  \begin{enumerate}[nosep]
  \item $h$ is affine on each simplex of $\hat{T}(P \times I)$, since it was already affine on
    $T(P \times I)$,
  \item $h(|T_{i}| \setminus L) \cap h(L) = \varnothing$,
  \item $h\rest L$ is injective.
  \end{enumerate}
  Therefore, by \cref{l:collapse_image}, the polyhedron $h(|T_{i}|)$ collapses to
  $h(|T_{i}| \setminus L) = h(|T_{i+1}|)$ (see~\cref{fig:collapsing}), completing the proof.
\end{proof}

\section{Injective metric and free deformation retraction}

In the present section, we discuss the connections between free deformation retraction and the
injective metrizability of spaces. In particular, we address the statement from~\cite{isbell64}
that an injective metric space is freely deformation retractable. We provide a counterexample to a
step in the proof given there, and we propose a correction for the case of compact spaces.
Finally, we discuss the potential use of these results for characterising collapsibility.

\begin{definition}
  A metric space $(X, d_{X})$ is called \emph{injective} if for every metric space $(Y, d_{Y})$,
  every subspace $Z \subseteq Y$, and every non-expansive map $f \from Z \to X$, meaning that
  $d_{X}(f(p), f(q)) \leq d_{Y}(p, q)$ for all $p, q \in Z$, there exists a non-expansive extension
  $\hat{f} \from Y \to X$ such that $d_{X}(\hat{f}(p), \hat{f}(q)) \leq d_{Y}(p, q)$ for all
  $p, q \in Y$ and $\hat{f}\rest{Z} = f$.
\end{definition}

There is an equivalent characterization of injective metric spaces: a metric space $(X, d_{X})$ is
injective if and only if for every collection of closed balls
$\{B(x_{\alpha}, r_{\alpha}) \mid \alpha \in I\}$ such that
$d_{X}(x_{\alpha}, x_{\beta}) \leq r_{\alpha} + r_{\beta}$ for all $\alpha, \beta \in I$, the
intersection $\bigcap_{\alpha \in I} B(x_{\alpha}, r_{\alpha})$ is non-empty. A metric space
satisfying this property is often called \emph{hyperconvex}; hence, hyperconvex and injective metric
spaces coincide. The reader may refer to~\cite{hyperconvex} for a proof of this equivalence and for
an overview of other properties of injective/hyperconvex spaces.

The following statement was made in~\cite{isbell64}; we refer to it as a conjecture, since, as we
discuss below, the proof given there contains a significant issue:

\begin{conjecture}[Theorem~1.1 in~\cite{isbell64}]
  An injective metric space is freely deformation contractible to each of its points.
\end{conjecture}

Below, we repeat the main line of the argument and point out the issue.

Suppose $(Y, d)$ is an injective metric space, and let $p \in Y$. Consider the collection
$\mathcal{X}$ of pairs $(X, h)$, where $X \subseteq Y$ and
$h \from X \times [0, +\infty) \to X$ is a map such that
\begin{enumerate}[nosep]
\item $h(X, 0) = \{ p \}$,
\item $h(X, t) \subseteq B(p, t)$ and $h(u, t) = u$ for all $u \in X \cap B(p, t)$,
\item $h$ is free: $h(h(u, t), s) = h(u, \min(t, s))$,
\item $h$ is non-expansive: $d(h(u, t), h(w, s)) \leq \max\{ d(u, w), |t-s| \}$.
\end{enumerate}

Define a partial order on $\mathcal{X}$ by declaring $(X_{1}, h_{1}) \prec (X_{2}, h_{2})$ whenever
$X_{1} \subseteq X_{2}$ and $h_{2}\rest{X_{1} \times [0, +\infty)} = h_{1}$. It is
straightforward to see that every chain in $\mathcal{X}$ is bounded above by the pair consisting of
the union of the sets in the chain and the correspondingly defined map on this union. Thus, by
Zorn's lemma, $\mathcal{X}$ has a maximal element; denote it by $(Z, h)$.

Since for every $(X, h)$ we have $(X, h) \preceq (\overline{X}, \overline{h})$, where $\overline{X}$
is the closure of $X$ and $\overline{h}$ is the extension of $h$ to
$\overline{X} \times [0, +\infty)$, the set $Z$ must be closed.\footnote{By injectivity of $Y$, the
  map $h \from X \times [0, +\infty) \to X \subset Y$ extends to a map
  $\overline{h} \from \overline{X} \times [0, +\infty) \to Y$.} It suffices to show that $Z = Y$,
since an appropriate reparametrization then makes $h$ a free deformation contraction of $Y$ to $p$.

Suppose for contradiction that there exists a point $q \notin Z$. We aim to construct a pair
$(Z', h') \succ (Z, h)$ with $q \in Z'$. The idea, following~\cite{isbell64}, is to take the solid
ball $B(p, d(p, q))$, which contains $q$, and extend the restriction of $h$ on
$(Z \cap B(p, d(p, q))) \times [0, d(p, q)]$ to a map
$B(p, d(p, q)) \times [0, d(p, q)] \to B(p, d(p, q))$ so that the trajectory of $q$ is a geodesic
$J$ from $q$ to $p$. Combining this free deformation along the geodesic with $h$, we obtain a free
deformation contraction of $Z \cup J$, contradicting the maximality of $(Z, h)$.

We claim that there exists a metric space $(Y, d)$, a pair $(Z, h)$, and a point
$q \in Y \setminus Z$ such that no $(Z', h') \succ (Z, h)$ satisfies $q \in Z'$. Equivalently, there
exist chains in $\mathcal{X}$ whose maximal elements do not contain $q$. This clearly provides a
counterexample to the second part of the proof.

Let $(Y, d)$ be $(\mathbb{R}^{2}, d_{\infty})$, where
$d_{\infty}((x_{1}, y_{1}), (x_{2}, y_{2})) = \max(|x_{1} - x_{2}|, |y_{1} - y_{2}|)$, and let
$p = (0, 0)$. Let $Z$ be the geodesic from the point $a \defeq (2, 1)$ to $p$, consisting of the two
straight segments $a = (2, 1) \longrightarrow (1, 0) \longrightarrow (0, 0) = p$, as shown
in~\cref{fig:counterexample}. Define a ``monotone'' free deformation contraction $h$ of this geodesic by
\[
  h((x, y), t) = \begin{cases}
    (x, y), &t \geq \max(x, y), \\
    (t, t - 1), &t \leq \max(x, y) \text{ and } t \in [1, 2], \\
    (t, 0), &t \leq \max(x, y) \text{ and } t \in [0, 1].
  \end{cases}
\]
It is straightforward to verify that $(Z, h) \in \mathcal{X}$.

\begin{figure}[tb]
  \centering
  \begin{tikzpicture}[scale=2,line cap=round]
    \draw[step=1,gray,dotted] (-0.5,-0.5) grid (2.5,2.5);

    \draw (-0.7, 0) -- (2.7, 0);
    \draw (0, -0.7) -- (0, 2.7);

    \fill[color=red] (2, 1) circle (1.4pt) node[above right] {$a$};
    \draw[color=red,line width=2pt] (2, 1) -- (1, 0) -- (0, 0);
    \fill[color=red] (0, 0) circle (1.4pt) node[below left] {$p$};
    \fill[color=blue] (2, 2) circle (1.4pt) node[above right] {$q$};
  \end{tikzpicture}
  
  \caption{Counterexample to the proof of Theorem~1.1 in~\cite{isbell64}. The subspace $Z$ is
    colored in red.}\label{fig:counterexample}
\end{figure}

Now take $q \defeq (2, 2)$, and suppose that there exists a pair $(Z', h') \succ (Z, h)$ with
$q \in Z'$. Since $d(p, q) = 2$, we must have $h'(q, 2) = q$. Moreover, for $x \in Z$, we have
$h'(x, s) = h(x, s)$ for every $s$. Hence,
\[
  d_{\infty}(h'(q, 2), h'(a, 1)) = d_{\infty}(q, (1, 0)) = 2 > \max(d_{\infty}(q, a), |2-1|) = 1,
\]
contradicting the non-expansiveness condition required for $(Z', h') \in \mathcal{X}$. Therefore, no
element in any chain of $\mathcal{X}$ containing $(Z, h)$ can contain the point $q$.

\subsection{Free deformation contractibility of compact injective spaces}

While the statement seems to remain a conjecture in the general case, it can be proven for compact
spaces using a different approach. Before describing the proof, we provide some preliminary
definitions.

\begin{definition}
  Let $(X, d)$ be a metric space. A \emph{bicombing} is a set-theoretic map
  \[
    \gamma \from X \times X \times I \to X
  \]
  such that for every $x, y \in X$, the restriction $\gamma(x, y, \cdot)$ is a constant-speed
  geodesic from $x$ to $y$, meaning that
  \begin{enumerate}[nosep]
  \item $\gamma(x, y, 0) = x$,
  \item $\gamma(x, y, 1) = y$,
  \item $d(\gamma(x, y, s), \gamma(x, y, t)) = |s - t|\, d(x, y)$ for all $s, t \in [0,1]$.
  \end{enumerate}

  The bicombing $\gamma$ is called \emph{conical} if, in addition, for all $x, y, x', y' \in X$ and
  $t \in [0,1]$, we have
  \[
    d(\gamma(x, y, t), \gamma(x', y', t)) \leq (1 - t) \, d(x, x') + t \, d(y, y').
  \]

  Moreover, a bicombing $\gamma$ is \emph{consistent} if for all $x, y \in X$ and $s, t \in [0,1]$,
  \begin{enumerate}[nosep]
  \item $\gamma(x, y, t) = \gamma(y, x, 1 - t)$,
  \item $\gamma(x, y, s t) = \gamma(x, \gamma(x, y, t), s)$.
  \end{enumerate}
\end{definition}

Recall that a metric space is called \emph{proper} if every bounded closed set is compact, or
equivalently, if every closed ball is compact. We will use the following result:

\begin{theorem*}[Theorem~1.4 in~\cite{basso_bicomb}]
  Let $(X, d)$ be a proper metric space admitting a conical bicombing. Then there exists a
  consistent bicombing $\gamma$ on $X$ such that, for every $x, y, x', y' \in X$ with
  $d(x, y) = d(x', y')$, the map
  \[
    t \mapsto d(\gamma(x, y, t), \gamma(x', y', t))
  \]
  is convex on $[0, 1]$.
\end{theorem*}

Now, we prove that the set-theoretic map $\gamma \from X \times X \times I \to X$ provided by the
previous theorem is actually continuous:

\begin{lemma}\label{l:bicomb_cont}
  Let $(X, d)$ be a metric space, and let $\gamma$ be a consistent bicombing such that the map
  $t \mapsto d(\gamma(x, y, t), \gamma(x', y', t))$ is convex on $[0, 1]$ for every $x, y, x', y'$
  with $d(x, y) = d(x', y')$. Then $\gamma \from X \times X \times I \to X$ is continuous.
\end{lemma}

\begin{proof}
  Let $\{(x_{n}, y_{n}, t_{n})\}_{n \in \N}$ be a sequence converging to $(x, y, t)$ in
  $X \times X \times I$. It suffices to show that
  $\gamma(x_{n}, y_{n}, t_{n}) \tendsto \gamma(x, y, t)$. Note that
  $d(x_{n}, y_{n}) \tendsto d(x, y)$ as the distance function $d \from X \times X \to \mathbb{R}$ is
  continuous.

  First, consider the case $x = y$. Noting that $\gamma(x, x, t) = x$, we have
  \begin{equation}\label{eq:triangle}
    d(\gamma(x_{n}, y_{n}, t_{n}), x) \leq d(\gamma(x_{n}, y_{n}, t_{n}), x_{n}) + d(x_{n}, x)
    = t_{n} d(x_{n}, y_{n}) + d(x_{n}, x).
  \end{equation}
  Since $d(x_{n}, y_{n}) \tendsto 0$ and $d(x_{n}, x) \tendsto 0$, it follows that
  $d(\gamma(x_{n}, y_{n}, t_{n}), \gamma(x, x, t)) \tendsto 0$.

  Next, consider the case $t = 0$. Since $\gamma(x, y, 0) = x$, we need to show that
  $d(\gamma(x_{n}, y_{n}, t_{n}), x) \tendsto 0$, which follows immediately from~\eqref{eq:triangle}
  because $t_{n} \tendsto 0$ and $d(x_{n}, x) \tendsto 0$. For $t = 1$, since $\gamma(x, y, 1) = y$,
  a similar argument gives
  \[
    d(\gamma(x_{n}, y_{n}, t_{n}), y) \leq (1 - t_{n}) d(x_{n}, y_{n}) + d(y_{n}, y) \tendsto 0.
  \]

  Therefore, we may assume that $x \neq y$ and $0 < t < 1$. We split the sequence
  $\{(x_{n}, y_{n}, t_{n})\}_{n \in \N}$ into two subsequences: one consisting of indices $n$ for
  which $d(x_{n}, y_{n}) \geq d(x, y)$, and the other of indices for which
  $d(x_{n}, y_{n}) < d(x, y)$. Since every term of the original sequence belongs to exactly one of
  these subsequences, it suffices to show that the images of both subsequences converge to
  $\gamma(x, y, t)$, or, if one subsequence is finite, that the images of the other subsequence
  converge to $\gamma(x, y, t)$.

  Consider first the subsequence with $d(x_{n}, y_{n}) \ge d(x, y)$, and assume it is infinite. For
  each sufficiently large $n$, let $s_{n} \defeq \frac{d(x, y)}{d(x_{n}, y_{n})}$. This is
  well-defined since $d(x_{n}, y_{n}) \tendsto d(x, y) \neq 0$, and also $s_{n} \leq 1$. Set
  $\hat{y}_{n} \defeq \gamma(x_{n}, y_{n}, s_{n})$. Then
  \[
    d(x_{n}, \hat{y}_{n}) = d(x_{n}, \gamma(x_{n}, y_{n}, s_{n})) = s_{n}d(x_{n}, y_{n}) = d(x, y),
  \]
  and the functions $f_{n}(\tau) \defeq d(\gamma(x_{n}, \hat{y}_{n}, \tau), \gamma(x, y, \tau))$ are
  convex on $[0, 1]$. Hence,
  \[
    \begin{aligned}
      f_{n}(0) &= d(x_{n}, x) \tendsto 0, \\
      f_{n}(1) &= d(\hat{y}_{n}, y) \leq d(\gamma(x_{n}, y_{n}, s_{n}), y_{n}) + d(y_{n}, y) \\
               &= (1 - s_{n})d(x_{n}, y_{n}) + d(y_{n}, y) = d(x_{n}, y_{n}) - d(x, y) + d(y_{n}, y) \tendsto 0,
    \end{aligned}
  \]
  It follows that $\sup_{\tau \in [0, 1]} f_{n}(\tau) \leq f_{n}(0) + f_{n}(1)$ converges to zero.

  Now, we have
  \begin{multline*}
    d(\gamma(x_{n}, y_{n}, t_{n}), \gamma(x, y, t)) \\
    \begin{aligned}
      &\leq d(\gamma(x_{n}, y_{n}, t_{n}), \gamma(x_{n}, \hat{y}_{n}, t_{n})) + d(\gamma(x_{n}, \hat{y}_{n}, t_{n}), \gamma(x, y, t_{n}))+ d(\gamma(x, y, t_{n}), \gamma(x, y, t)) \\
      &= d(\gamma(x_{n}, y_{n}, t_{n}), \gamma(x_{n}, \gamma(x_{n}, y_{n}, s_{n}), t_{n})) + f_{n}(t_{n}) + d(x, y)|t - t_{n}| \\
      &= d(\gamma(x_{n}, y_{n}, t_{n}), \gamma(x_{n}, y_{n}, s_{n} t_{n})) + f_{n}(t_{n}) + d(x, y)|t - t_{n}| \\
      &= d(x_{n}, y_{n})(t_{n} - s_{n} t_{n}) + f_{n}(t_{n}) + d(x, y)|t - t_{n}|.
    \end{aligned}
  \end{multline*}
  Since $d(x_{n}, y_{n})$ converges to $d(x, y)$ and hence $s_{n} = \frac{d(x, y)}{d(x_{n}, y_{n})}$
  converges to 1, we have $d(x_{n}, y_{n})(t_{n} - s_{n} t_{n}) \tendsto 0$. As
  $\sup_{\tau \in [0, 1]} f_{n}(\tau)$ converges to zero, $f_{n}(t_{n})$ converges to zero as well. Finally,
  $t_{n} \tendsto t$ implies $d(x, y)|t - t_{n}| \tendsto 0$. Therefore,
  $d(\gamma(x_{n}, y_{n}, t_{n}), \gamma(x, y, t))$ converges to zero, as desired.

  Next, consider the subsequence with $d(x_{n}, y_{n}) < d(x, y)$, and assume it is infinite. Set
  $s_{n} \defeq \frac{d(x_{n}, y_{n})}{d(x, y)} < 1$ and $y^{(n)} \defeq \gamma(x, y, s_{n})$.
  As in the previous case, we have
  \[
    d(x, y^{(n)}) = d(x, \gamma(x, y, s_{n})) = s_{n}d(x, y) = d(x_{n}, y_{n}),
  \]
  so the functions $f_{n}(\tau) \defeq d(\gamma(x_{n}, y_{n}, \tau), \gamma(x, y^{(n)}, \tau))$ are
  convex on $I$, and 
  \[
    \begin{aligned}
      f_{n}(0) &= d(x_{n}, x) \tendsto 0, \\
      f_{n}(1) &= d(y_{n}, y^{(n)}) \leq d(y_{n}, y) + d(y, \gamma(x, y, s_{n})) \\
               &= d(y_{n}, y) + (1 - s_{n}) d(x, y) = d(y_{n}, y) + d(x, y) - d(x_{n}, y_{n}) \tendsto 0,
    \end{aligned}
  \]
  Hence $\sup_{\tau \in [0, 1]} f_{n}(\tau)$ converges to zero. Therefore,
  \begin{multline*}
    d(\gamma(x_{n}, y_{n}, t_{n}), \gamma(x, y, t)) \\
    \begin{aligned}
      &\leq d(\gamma(x_{n}, y_{n}, t_{n}), \gamma(x, y^{(n)}, t_{n})) + d(\gamma(x, y^{(n)}, t_{n}),
        \gamma(x, y, t_{n})) + d(\gamma(x, y, t_{n}), \gamma(x, y, t)) \\
      &= f_{n}(t_{n}) + d(x, y)(t_{n} - t_{n}s_{n}) + d(x, y)|t_{n}-t| \tendsto 0.
    \end{aligned}
  \end{multline*}

  This concludes the proof.
\end{proof}

We are now ready to prove Isbell's statement for the proper spaces.

\begin{theorem}\label{theorem:injective_metric}
  Every proper injective metric space is freely deformation contractible to each of its point.  
\end{theorem}

\begin{proof}
  Let $(X, d)$ be a proper injective metric space. By Proposition~3.8 in~\cite{urs_lang}, $X$ admits
  a conical bicombing. Then Theorem~1.4 in~\cite{basso_bicomb} provides a consistent bicombing
  $\gamma \from X \times X \times I \to X$ such that $t \mapsto d(\gamma_{xy}(t), \gamma_{x'y'}(t))$
  is convex whenever $d(x, y) = d(x', y')$. By~\cref{l:bicomb_cont}, $\gamma$ is continuous.

  Fix $p \in X$ and define $h \from X \times [0, +\infty)$ by
  \[
    h(x, t) = \begin{cases}
      p, & x = p, \\
      \gamma\left(p, x, \min\left\{\frac{t}{d(p, x)}, 1\right\}\right), & x \neq p. \\
    \end{cases}
  \]

  Observe that $h(x, 0) = p$ and $h(x, t) = x$ whenever $t \geq d(p, x)$. Moreover, for $x \neq p$,
  we have
  \[
    d(p, h(x, t)) = \begin{cases}
      d(p, x), &t \geq d(p, x), \\
      d\left(p, \gamma\left(p, x, \frac{t}{d(p, x)}\right)\right) = \frac{t}{d(p, x)}d(p, x) = t, &t \leq d(p, x),
    \end{cases}
  \]
  and hence $d(p, h(x, t)) = \min(t, d(p, x))$.

  Next, we verify a form of the freeness condition: for all $s, t \geq 0$, we have
  $h(h(x, t), s) = h(x, \min(s, t))$. For $x = p$ or $\min(s, t) = 0$, both sides equal to $p$. Assuming
  $x \neq p$, we compute
  \begin{align*}
    h(h(x, t), s) & = \gamma\left(p, \gamma\left(p, x, \min\left\{ \frac{t}{d(p, x)}, 1
                    \right\}\right), \min\left\{ \frac{s}{d(p, h(x, t))}, 1 \right\}\right) \\
                  & = \gamma\left(p, x, \min\left\{ \frac{t}{d(p, x)}, 1 \right\} \cdot
                    \min\left\{\frac{s}{\min(t, d(p, x))}, 1 \right\}\right) \\
                  & = \begin{cases}
                    \gamma\left(p, x, \min\left\{\frac{s}{d(p, x)}, 1\right\}\right) =
                    \gamma\left(p, x, \min\left\{\frac{\min(s, t)}{d(p, x)}, 1\right\}\right), & t \geq d(p, x), \\
                    \gamma\left(p, x, \frac{t}{d(p, x)} \cdot \min\left\{\frac{s}{t}, 1\right\}\right)
                    = \gamma\left(p, x, \min\left\{\frac{\min(s, t)}{d(p, x)}, 1\right\}\right), & t \leq d(p, x).
                  \end{cases}
  \end{align*}

  Hence,
  \[
    h(h(x, t), s) = \gamma\left(p, x, \min\left\{\frac{\min(s, t)}{d(p, x)}, 1\right\}\right) = h(x,
    \min(t, s)).
  \]

  Finally, observe that as $t_{n} \tendsto +\infty$ and $x_n \tendsto x$, the sequence
  $h(x_n, t_{n})$ converges to $x$. Therefore, $h$ extends continuously to a map
  $\hat{h} \from X \times [0, +\infty] \to X$, where $[0, +\infty]$ denotes the one-point
  compactification of $[0, +\infty)$, by setting $\hat{h}(x, +\infty) = x$.

  Next, let $\theta \from [0, 1] \to [+\infty, 0]$ be any monotone homeomorphism with
  $\theta(0) = +\infty$ and $\theta(1) = 0$, for example $\theta(t) = \frac{1-t}{t}$ with
  $\theta(0) = +\infty$, and define $H(x, t) \defeq h(x, \theta(t))$.

  By construction, $H(x, 0) = x$ and $H(x, 1) = p$. Moreover, the ``freeness'' of $h$ ensures that
  \[
    H(H(x, t), s) = h(h(x, \theta(t)), \theta(s)) = h(x, \min(\theta(t), \theta(s))) = H(x, \max(t, s)).
  \]
  Hence, $H$ is a free deformation contraction of $X$ to $p$.
\end{proof}

The above proof applies only to proper spaces; therefore, in the general case, Isbell's statement
remains a conjecture.\footnote{It seems to be unknown whether Theorem~1.4 in~\cite{basso_bicomb} can
  be generalized to the general case. This question is related to Descombes-Lang's question of
  whether every geodesic metric space admitting a conical bicombing also admits a consistent convex
  bicombing (\cite{descombes-lang}, see also Section~7 in~\cite{basso2}).} Nevertheless, the result
clearly holds for compact polyhedra, since compact spaces are proper: if a compact polyhedron $P$
admits an injective metric, then $P$ admits a free deformation contraction to a point. Moreover, if
under suitable conditions the resulting contraction is piecewise-linear, then, in view
of~\cref{theorem:main_theorem}, we would obtain an alternative characterization of collapsibility in
metric terms.

\section{Discussion}

In this section, we discuss some open questions and possible directions for further research. We
begin by considering questions related to some aspects of our main result.

In the proof of~\cref{theorem:main_theorem}, the simplicial collapsing of a chosen triangulation of
$P \times I$ produces a polyhedral collapsing in the image under the free deformation retraction
$h$. One may ask whether this can be arranged so that both collapsing are simplicial. In other
words, can we choose triangulations of $P \times I$ and of $P$ so that the simplicial collapsing in
the domain induces a simplicial collapsing in the codomain? This seems to be connected to a natural
question: under what conditions can two piecewise-linear maps with the same domain be simultaneously
triangulated? The only result known to the author in this direction does not appear to apply to our
setting~\cite{bryant}.

Another question is whether the hypotheses on the deformation retraction
in~\cref{theorem:main_theorem} can be weakened. It is clear that, in general, the
piecewise-linearity assumption cannot be omitted, since there exist non-collapsible polyhedra that
are topologically homeomorphic to balls~\cite{berstein1978contractible}. However, in dimensions one
and two the theorem remains valid even without assuming piecewise-linearity~\cite{isbell64}. The
author is not aware of any analogous results in dimension three, which is particularly relevant in
view of the Zeeman conjecture.

In this connection, the following question posed by Sergey Melikhov in a private discussion is also
of interest. A strict deformation retraction $h \from P \times I \to P$ is called \emph{semi-free}
if $h_{s} \circ h_{t} = h_{t}$ whenever $t \geq s$, or equivalently,
$h_{s}\rest{h_{t}(P)} = \id_{h_{t}(P)}$ for $t \geq s$. Does~\cref{theorem:main_theorem} remain true
when free deformation retractibility is replaced by semi-free deformation retractibility?

Finally, it is natural to ask whether there exist invariant metric characterisations of
collapsibility. The existence of non-invariant characterisations, together with the connection
between free deformation retractability and injective metrizability discussed in the previous
section, suggests that such characterisations may indeed exist. For instance, one may ask what
(invariant) conditions on an injective metric guarantee that it yields a piecewise-linear free
contraction. This question is likely closely related to the problem of understanding the
relationship between conical bicombings and consistent conical bicombings; see also the remarks
following Question~1 in~\cite{basso}.

\section*{Acknowledgements}

I express my deep gratitude to Sergey Melikhov for introducing me to this subject and for the
invaluable discussions and guidance provided throughout my research. I am also very grateful to
Francis Lazarus and Martin Deraux for their insightful feedback and criticism on the final version
of this text, and to Giuliano Basso for kindly answering questions regarding bicombings.

\printbibliography

\end{document}